\newtheorem{definition}{Definition}
\newtheorem{theorem}[definition]{Theorem}
\newtheorem{lemma}[definition]{Lemma}
\newtheorem{corollary}[definition]{Corollary}
\newtheorem{remark}[definition]{Remark}
\newtheorem{fact}[definition]{Fact}
\newtheorem{example}[definition]{Example}
\newtheorem{proposition}[definition]{Proposition}
\newenvironment{2ndproof}{\paragraph{{\it Second Proof. }}}{\hfill$\square$\\}
\def\Q{\mathbb Q}
\def\N{\mathbb N}
\def\Z{\mathbb Z}
\begin{document}

\title[IBN]{Cohn-Leavitt Path Algebras and the Invariant Basis Number Property}

\author{Müge Kanuni}
 \address{Department of Mathematics, Düzce University, 
Konuralp Düzce 81620 Turkey} \email{muge.kanuni@duzce.edu.tr}
\author{Murad Özaydın}
\address{Department of Mathematics, University of Oklahoma,
Norman OK  U.S.A.} \email{mozaydin@ou.edu}

\subjclass[2010]{Primary 16E20 Secondary 16S99} \keywords{Cohn-Leavitt path algebra, Invariant Basis Number, Morita Equivalence}

%\begin{abstract}
%We give the necessary and sufficient condition for a separated Cohn-Leavitt path algebra of a finite digraph to have IBN. As a consequence, %separated Cohn path algebras have IBN. We determine the non-stable K-theory of a corner ring in terms of the non-stable 
%K-theory of the ambient ring. We give a necessary condition for a corner algebra of a separated Cohn-Leavitt path algebra of a finite graph to have IBN. We provide Morita equivalent rings which are non-IBN, but are of different types.
%\end{abstract}

\maketitle

\date{}

\maketitle
\onehalfspacing
\section{Introduction}

Leavitt Path Algebras were defined just over a decade ago \cite{AA1,AMP}, but they have roots in the works of Leavitt in the 60s focused on understanding the nature of the failure of the IBN (Invariant Basis Number) property for arbitrary rings \cite{L1}.

\medskip

A ring has IBN if any two bases of a finitely generated free module have the same number of elements. Fields, division rings, commutative rings, Noetherian rings all have IBN. A classical example of a ring without IBN is the algebra of endomorphisms of a countably infinite dimensional vector space. The free module of rank 1 over this ring has bases of $n$ elements for any positive integer $n$. In the early 60s William Leavitt asked and then answered this question: Given  any positive integers $m<n$ is there a ring $R$ having a free module with a basis of $m$ elements and another basis with $n$ elements but no bases with $k$ elements if $k<n$ and not equal to $m$? Such a ring is said to be non-IBN of type $(m,n)$. 

\medskip

The algebras Leavitt constructed are denoted by $L(m,n)$ and nowadays called the Leavitt algebras. They have a semi-universal property: for any algebra $A$ satisfying $A^m \cong A^n$ as $A$-modules there is an algebra homomorphism (not necessarily
 unique) from $L(m,n)$ to $A$. They are simple if and only if $m=1$, \cite{L2}. The fact that a free $L(m,n)$ module of rank m has a basis of $n$ elements is immediate from Leavitt's definition. However, to show that there are no bases of $k$ elements with $m<k<n$ requires an algebraic invariant, namely $\mathcal{V}(R)$ usually referred to as nonstable $\mathcal{K}$-theory. 

\medskip

$\mathcal{V}(R)$ is the monoid of isomorphism classes of finitely generated projective $R$-modules under direct sum. The Grothendieck group of 
$\mathcal{V}(R)$ is denoted by $\mathcal{K}_0(R)$. It turns out that $\mathcal{K}_0(R)$ (in fact, $\mathcal{K}_0(R) \otimes \Q$) suffices to determine whether $R$ has IBN or not ( \cite[Corollary 5.7]{AGP}, also Proposition \ref{RIBNU(R)inf} below). However, $\mathcal{K}_0(R)$ cannot detect the type $(m,n)$ when $R$ is non-IBN. Another reason to work with $\mathcal{V}(R)$ rather than $\mathcal{K}_0(R)$ is to understand the 
$\mathcal{K}$-theory of corner subrings of $R$. 

\medskip

For any idempotent $\alpha \in R$, $\alpha R \alpha$ is the corner ring associated with $\alpha$. While $\mathcal{V}(\alpha R \alpha)$ is 
(isomorphic to) a submonoid of $\mathcal{V}(R)$, the corresponding does not hold for the $\mathcal{K}_0$ groups (Theorem \ref{V(xRx)1-1V(R)} and Example \ref{Toeplitz2} below). 
In particular, $\mathcal{K}_0(R)$ cannot detect whether $\alpha R \alpha$ has IBN or not, but $\mathcal{V}(R)$ does, as well as is determining its type $(m,n)$ when it is non-IBN.

\medskip 
In general, it is difficult to compute $\mathcal{K}_0(R)$, even harder to compute $\mathcal{V}(R)$ for an arbitrary ring. However, in \cite{B} 
George Bergman gave a construction for a ring $R$ whose $\mathcal{V}(R)$ is any given commutative finitely generated monoid satisfying two obviously necessary conditions. Leavitt path algebras were shown to fit the Bergman scheme in \cite{AMP} and this was extended to Cohn-Leavitt 
path algebras of a separated di(rected )graph in \cite{AG}. Specifically, they showed that $\mathcal{V}(R)$ is isomorphic to the monoid of the (separated) digraph which is given explicitly with generators and relations \cite[Theorem 4.3]{AG}. 

\medskip 

The main purpose of this note is to give an easily checked criterion to determine whether a separated Cohn-Leavitt path algebras has IBN (Theorem \ref {MainThmforseparatedLPA}). An immediate consequence is that separated Cohn path algebras have IBN, which generalizes the main result in \cite{AK}. We also describe the non-stable $\mathcal{K}$-theory of a corner ring explicitly from the non-stable $\mathcal{K}$-theory of the ring (Theorem \ref{V(xRx)1-1V(R)}).

\medskip

A proof of Theorem \ref {MainThmforseparatedLPA} in the special case of Leavitt path algebras was presented during the CIMPA 2015 
Research School in Izmir by the first named author. The videos of all the lectures are publicly available at the website 
http://nesinkoyleri.org/eng/events/2015-lpa/. Below we give a more algebraic proof, but we also sketch the original proof.

\medskip

In Section 2 below we give a geometric representation of a commutative semigroup defined by generators and relations. We also define graph semigroups, the Grothendieck group construction and a closure operator on a semigroup. We list some basic properties that we will use and give a few examples and counterexamples.
Proposition \ref{G(U)embedG(V)} proves that the Grothendieck group of the semigroup of isomorphism classes of finitely generated free 
$L$-modules over a separated Cohn-Leavitt path algebra $L$ embeds in $\mathcal{K}_0(L)$. 

\medskip

Section 3 and 4 are the core of our paper. In Section 3, Theorem \ref{MainThmforseparatedLPA} gives the necessary and sufficient condition for a separated Cohn-Leavitt path algebra of a finite digraph to have IBN. As a consequence, we show that the separated Cohn path algebras have IBN (Corollary \ref{sepCohnIBN}). 

\medskip

In Section 4, Theorem \ref{V(xRx)1-1V(R)} gives the non-stable 
$\mathcal{K}$-theory of a corner ring explicitly in terms of the non-stable $\mathcal{K}$-theory of the ring. It proves that 
$\mathcal{V}(\alpha R \alpha)$ embeds in $\mathcal{V}(R)$ and also shows that $\mathcal{V}(\alpha R \alpha)$ is isomorphic to the closure of the semigroup generated by the isomorphism class of $\alpha R$ in $\mathcal{V}(R)$. Theorem \ref{MainThmforCornerLPA} states a necessary condition for a corner algebra of a separated Cohn-Leavitt path algebra of a finite graph to have IBN. We also provide an example to show that this condition is not sufficient in general. In Example \ref{E1}, we provide Morita equivalent rings which are non-IBN, but are of different types.
      
%Section 5 is devoted to examples. In Example \ref{E1}, we provide Morita equivalent rings which are non-IBN, but are of different types. We also provide several examples of Leavitt path algebras (both IBN and non-IBN) satisfying 
%the same ring theoretical conditions which are characterized by graph theoretical properties. 
% On the other hand, there are graph theoretical properties (such as : having finite GK-dimension equivalently all cycles are disjoint 
%\cite{AAJZ}) whose LPA are all IBN. 
%These seem to indicate that a graph theoretical condition equivalent to a Leavitt Path Algebras having IBN will necessarily be rather complicated. 
 
\section{Preliminaries}
\subsection{Finitely Presented Commutative Semigroups}

Let $\Omega$ be a finite set of generators and $$\{ x_i = y_i : \, x_i, y_i \in \N\Omega \setminus \{{ \mathbf 0}\} \}_{i=1}^n$$ be the relations. Consider the graph with vertex set $\N\Omega  \setminus \{{ \mathbf 0} \}$ . 
%define the free commutative semigroup 
%and impose finitely many relations $$x_i = y_i \mbox{ where } x_i, y_i \in \N\Omega \setminus \{{ \mathbf 0}, \mbox{ for } i=1,...,n.$$  
Each relation $x_i = y_i$, gives an edge $W_i$ joining $x_i$ to $y_i$. Let the edges of this graph be all translates of the $W_i$, $i=1,...,n$ by elements of $\N\Omega  \setminus \{{ \mathbf 0} \}$.  The path components of this graph yield the geometric representation of the semigroup $S$ generated by $\Omega$ subject to these relations. (If the nodes are connected via any sequence of edges, then they are in the same equivalence class. Each path component corresponds to a distinct element of $S$.) The addition on $S$ descends from the addition of $\N\Omega  \setminus \{{ \mathbf 0} \}$.
\begin{example} \label{geoL(2,5)} 
Take $\Omega= \{v,w \}$, and the relations $R_1 : \, \, v = 2w$ and $R_2 : \, \, v =5w$. To emphasize the difference we mark the relation and its translates of $R_1$ with blue and $R_2$ with red. Then, the geometric representation of 
$\N\Omega  \setminus \{{ \mathbf 0} \}$ subject to $\{ R_1,R_2\}$ is given by 
\begin{figure}[h]
\begin{tikzpicture}
\draw[dotted, help lines] (0,0) grid (4,5);
\draw[-, blue, ultra thick] (1,0) -- (0,2);
\draw[-, blue] (1,1) -- (0,3);
\draw[-, blue] (1,2) -- (0,4);
\draw[-, blue] (2,0) -- (1,2);
\draw[-, blue] (2,1) -- (1,3);
\draw[-, blue] (2,2) -- (1,4);
\draw[-, blue] (3,0) -- (2,2);
\draw[-, blue] (3,1) -- (2,3);
\draw[-, blue] (3,2) -- (2,4);
\draw[-, blue] (3,3) -- (2,5);
\draw[-, blue] (2,3) -- (1,5);
\draw[-, blue] (1,3) -- (0,5);
\draw[-, blue] (4,0) -- (3,2);
\draw[-, blue] (4,1) -- (3,3);
\draw[-, blue] (4,2) -- (3,4);
\draw[-, blue] (4,3) -- (3,5);
\draw[-, red, ultra thick] (1,0) -- (0,5);
%\draw[-, red] (1,1) -- (0,6);
%\draw[-, red] (1,2) -- (0,7);
\draw[-, red] (2,0) -- (1,5);
%\draw[-, red] (2,1) -- (1,6);
%\draw[-, red] (2,2) -- (1,7);
\draw[-, red] (3,0) -- (2,5);
%\draw[-, red] (3,1) -- (2,6);
%\draw[-, red] (3,2) -- (2,7);
%\draw[-, red] (3,3) -- (2,8);
%\draw[-, red] (2,3) -- (1,8);
%\draw[-, red] (1,3) -- (0,8);
\draw[-, red] (4,0) -- (3,5);
%\draw[-, red] (4,1) -- (3,6);
%\draw[-, red] (4,2) -- (3,7);
%\draw[-, red] (4,3) -- (3,8);
\node at (1,-0.3) {1};
\node at (2,-0.3) {2};
\node at (3,-0.3) {3};
\node at (4,-0.3) {4};
\node at (0,-0.3) {0};
\node at (-0.2,1) {1};
\node at (-0.2,2) {2};
\node at (-0.2,3) {3};
\node at (-0.2,4) {4};
\node at (-0.2,5) {5};
\node at (4,-1) {v};
\node at (-1,3) {w};
%\node at (2,-2) {$S(\Gamma)$};
\draw [fill=black] (1,0) circle [radius=0.04];
\draw [fill=black] (1,1) circle [radius=0.04];
\draw [fill=black] (1,2) circle [radius=0.04];
\draw [fill=black] (1,3) circle [radius=0.04];
\draw [fill=black] (1,4) circle [radius=0.04];
\draw [fill=black] (1,5) circle [radius=0.04];
\draw [fill=black] (2,0) circle [radius=0.04];
\draw [fill=black] (2,1) circle [radius=0.04];
\draw [fill=black] (2,2) circle [radius=0.04];
\draw [fill=black] (2,3) circle [radius=0.04];
\draw [fill=black] (2,4) circle [radius=0.04];
\draw [fill=black] (2,5) circle [radius=0.04];
\draw [fill=black] (3,0) circle [radius=0.04];
\draw [fill=black] (3,1) circle [radius=0.04];
\draw [fill=black] (3,2) circle [radius=0.04];
\draw [fill=black] (3,3) circle [radius=0.04];
\draw [fill=black] (3,4) circle [radius=0.04];
\draw [fill=black] (3,5) circle [radius=0.04];
\draw [fill=black] (4,0) circle [radius=0.04];
\draw [fill=black] (4,1) circle [radius=0.04];
\draw [fill=black] (4,2) circle [radius=0.04];
\draw [fill=black] (4,3) circle [radius=0.04];
\draw [fill=black] (4,4) circle [radius=0.04];
\draw [fill=black] (4,5) circle [radius=0.04];
%\draw [fill=black] (0,0) circle [radius=0.04];
\draw [fill=black] (0,1) circle [radius=0.04];
\draw [fill=black] (0,2) circle [radius=0.04];
\draw [fill=black] (0,3) circle [radius=0.04];
\draw [fill=black] (0,4) circle [radius=0.04];
\end{tikzpicture}
\end{figure}

\end{example}
If we take $\Z\Omega$ as vertices and translates of the $W_i$ above by $\Z\Omega$ as edges, then the connected components give the Grothendieck group of the semigroup $S$ to be explained below. 
\begin{example} \label{geoToeplitz}
Take $\Omega= \{v,w \}$, and the relation $R_1 : \, \, v = v +w$. The geometric representation of 
$\N\Omega  \setminus \{{ \mathbf 0} \}$ subject to $\{ R_1\}$ is given by the figure on the left and $\Z \Omega / < R_1 >$ is given by the figure on the right.
\begin{figure}[h]
\begin{tikzpicture}
\draw[dotted, help lines] (0,0) grid (4,4);
\draw[-, blue, ultra thick] (1,0) -- (1,1);
\draw[-, blue] (1,1) -- (1,2);
\draw[-, blue] (1,2) -- (1,3);
\draw[-, blue] (2,0) -- (2,1);
\draw[-, blue] (2,1) -- (2,2);
\draw[-, blue] (2,2) -- (2,3);
\draw[-, blue] (3,0) -- (3,1);
\draw[-, blue] (3,1) -- (3,2);
\draw[-, blue] (3,2) -- (3,3);
\draw[-, blue] (3,3) -- (3,4);
\draw[-, blue] (2,3) -- (2,4);
\draw[-, blue] (1,3) -- (1,4);
\draw[-, blue] (4,0) -- (4,1);
\draw[-, blue] (4,1) -- (4,2);
\draw[-, blue] (4,2) -- (4,3);
\draw[-, blue] (4,3) -- (4,4);
\node at (1,-0.3) {1};
\node at (2,-0.3) {2};
\node at (3,-0.3) {3};
\node at (4,-0.3) {4};
\node at (0,-0.3) {0};
\node at (-0.2,1) {1};
\node at (-0.2,2) {2};
\node at (-0.2,3) {3};
\node at (-0.2,4) {4};
\node at (4,-1) {v};
\node at (-1,3) {w};
%\node at (2,-2) {$S(\Gamma)$};
\draw [fill=black] (1,0) circle [radius=0.04];
\draw [fill=black] (1,1) circle [radius=0.04];
\draw [fill=black] (1,2) circle [radius=0.04];
\draw [fill=black] (1,3) circle [radius=0.04];
\draw [fill=black] (1,4) circle [radius=0.04];
\draw [fill=black] (2,0) circle [radius=0.04];
\draw [fill=black] (2,1) circle [radius=0.04];
\draw [fill=black] (2,2) circle [radius=0.04];
\draw [fill=black] (2,3) circle [radius=0.04];
\draw [fill=black] (2,4) circle [radius=0.04];
\draw [fill=black] (3,0) circle [radius=0.04];
\draw [fill=black] (3,1) circle [radius=0.04];
\draw [fill=black] (3,2) circle [radius=0.04];
\draw [fill=black] (3,3) circle [radius=0.04];
\draw [fill=black] (3,4) circle [radius=0.04];
\draw [fill=black] (4,0) circle [radius=0.04];
\draw [fill=black] (4,1) circle [radius=0.04];
\draw [fill=black] (4,2) circle [radius=0.04];
\draw [fill=black] (4,3) circle [radius=0.04];
\draw [fill=black] (4,4) circle [radius=0.04];
%\draw [fill=black] (0,0) circle [radius=0.04];
\draw [fill=black] (0,1) circle [radius=0.04];
\draw [fill=black] (0,2) circle [radius=0.04];
\draw [fill=black] (0,3) circle [radius=0.04];
\draw [fill=black] (0,4) circle [radius=0.04];
\draw[dotted, help lines] (8,-2) grid (12,4);
\draw[-, blue, ultra thick] (10,0) -- (10,1);
\draw[-, blue] (10,1) -- (10,2);
\draw[-, blue] (10,2) -- (10,3);
\draw[-, blue] (10,3) -- (10,4);
\draw[-, blue] (11,0) -- (11,1);
\draw[-, blue] (11,1) -- (11,2);
\draw[-, blue] (11,2) -- (11,3);
\draw[-, blue] (11,3) -- (11,4);
\draw[-, blue] (12,0) -- (12,1);
\draw[-, blue] (12,1) -- (12,2);
\draw[-, blue] (12,2) -- (12,3);
\draw[-, blue] (12,3) -- (12,4);
\draw[-, blue] (9,0) -- (9,1);
\draw[-, blue] (9,1) -- (9,2);
\draw[-, blue] (9,2) -- (9,3);
\draw[-, blue] (9,3) -- (9,4);
\draw[-, blue] (9,-1) -- (9,0);
\draw[-, blue] (9,-2) -- (9,-1);
\draw[-, blue] (8,0) -- (8,1);
\draw[-, blue] (8,1) -- (8,2);
\draw[-, blue] (8,2) -- (8,3);
\draw[-, blue] (8,3) -- (8,4);
\draw[-, blue] (8,-1) -- (8,0);
\draw[-, blue] (8,-2) -- (8,-1);
\draw[-, blue] (9,-1) -- (9,0);
\draw[-, blue] (9,-2) -- (9,-1);
\draw[-, blue] (10,-1) -- (10,0);
\draw[-, blue] (10,-2) -- (10,-1);
\draw[-, blue] (11,-1) -- (11,0);
\draw[-, blue] (11,-2) -- (11,-1);
\draw[-, blue] (12,-1) -- (12,0);
\draw[-, blue] (12,-2) -- (12,-1);
\node at (9.8,-0.3) {1};
\node at (10.8,-0.3) {2};
\node at (11.8,-0.3) {3};
\node at (8.8,-0.3) {0};
\node at (8,-0.3) {-1};
\node at (8.8,-1) {-1};
\node at (8.8,-2) {-2};
\node at (8.8,1) {1};
\node at (8.8,2) {2};
\node at (8.8,3) {3};
\node at (8.8,4) {4};
\node at (13,0) {v};
\node at (9,4.5) {w};
%\node at (10,-3) {$S_{\Z}(\Gamma)$};
\draw [fill=black] (8,0) circle [radius=0.04];
\draw [fill=black] (8,1) circle [radius=0.04];
\draw [fill=black] (8,2) circle [radius=0.04];
\draw [fill=black] (8,3) circle [radius=0.04];
\draw [fill=black] (8,4) circle [radius=0.04];
\draw [fill=black] (9,0) circle [radius=0.04];
\draw [fill=black] (9,1) circle [radius=0.04];
\draw [fill=black] (9,2) circle [radius=0.04];
\draw [fill=black] (9,3) circle [radius=0.04];
\draw [fill=black] (9,4) circle [radius=0.04];
\draw [fill=black] (10,0) circle [radius=0.04];
\draw [fill=black] (10,1) circle [radius=0.04];
\draw [fill=black] (10,2) circle [radius=0.04];
\draw [fill=black] (10,3) circle [radius=0.04];
\draw [fill=black] (10,4) circle [radius=0.04];
\draw [fill=black] (11,0) circle [radius=0.04];
\draw [fill=black] (11,1) circle [radius=0.04];
\draw [fill=black] (11,2) circle [radius=0.04];
\draw [fill=black] (11,3) circle [radius=0.04];
\draw [fill=black] (11,4) circle [radius=0.04];
\draw [fill=black] (12,0) circle [radius=0.04];
\draw [fill=black] (12,1) circle [radius=0.04];
\draw [fill=black] (12,2) circle [radius=0.04];
\draw [fill=black] (12,3) circle [radius=0.04];
\draw [fill=black] (12,4) circle [radius=0.04];
\draw [fill=black] (12,-1) circle [radius=0.04];
\draw [fill=black] (12,-2) circle [radius=0.04];
%\draw [fill=black] (12,-3) circle [radius=0.04];
%\draw [fill=black] (12,-4) circle [radius=0.04];
\draw [fill=black] (11,-1) circle [radius=0.04];
\draw [fill=black] (11,-2) circle [radius=0.04];
%\draw [fill=black] (11,-3) circle [radius=0.04];
%\draw [fill=black] (11,-4) circle [radius=0.04];
\draw [fill=black] (10,-1) circle [radius=0.04];
\draw [fill=black] (10,-2) circle [radius=0.04];
%\draw [fill=black] (10,-3) circle [radius=0.04];
%\draw [fill=black] (10,-4) circle [radius=0.04];
\draw [fill=black] (9,-1) circle [radius=0.04];
\draw [fill=black] (9,-2) circle [radius=0.04];
%\draw [fill=black] (9,-3) circle [radius=0.04];
%\draw [fill=black] (9,-4) circle [radius=0.04];
\draw [fill=black] (8,-1) circle [radius=0.04];
\draw [fill=black] (8,-2) circle [radius=0.04];
\end{tikzpicture}
%\caption{$S(\Gamma)$ \hspace{6 cm} $S_{\Z}(\Gamma)$}
\end{figure}

\end{example}

\subsection{Grothendieck Groups} 
\subsubsection{Closure operator on a semigroup}

An element $s$ of a semigroup $S$ is said to have {\bf infinite type} (or infinite order) if $s,s^2,s^3, \dots$ are distinct. Otherwise, $s$ is said to 
be {\bf torsion of type $(m,n)$} if $n$ is the least positive integer such that $x^n =x^m$ for some $0<m<n$.

If a semigroup generated by a single element is called {\bf cyclic} and it is determined (up to isomorphism) by the type of its generator. 

There is a pre-order $\preccurlyeq$ on a commutative (additive) semigroup $S$: $x \preccurlyeq y$ if and only if there is a 
$z \in S$ with $x + z =y$. 
For any subset $A$ of $S$, its {\bf closure} is  
$$\overline{A}:=\{x \in S : \, x \preccurlyeq s \mbox{ for some } s \in A\}. $$ 
This defines a closure operator on (the power set of) $S$, since: \\
(1) $A \subseteq \overline{A}$, \\
(2) $\overline{\overline{A}} = \overline{A}$, \\
(3) $ \overline{A \cup B} = \overline{A} \cup \overline{B}$. 

Note that if $A$ is a subsemigroup of $S$, then $\overline{A}$ is also a subsemigroup.
A set $F$ is { \bf closed},  if and only if  $a \preccurlyeq b \in F$ implies $a \in F$ for all $a,b \in S$. 

For any ring $R$, we denote by $\mathcal{V}(R)$ the additive commutative monoid of isomorphism classes of finitely generated projective (right) $R$-modules under direct sum. 
We denote the semigroup $\mathcal{V}(R) \setminus \{ [0]\}$ by $\mathcal{V}^*(R)$. Let $\mathcal{U}(R)$ be the cyclic submonoid of $\mathcal{V}(R)$, generated by $[R]$, hence $\mathcal{U}^*(R)$ is the semigroup of isomorphism classes of finitely generated nonzero free $R$-modules.
Note that $\mathcal{V}(R) = \overline{\mathcal{U}(R)}$. More generally, a finitely generated projective $R$-module $P$ is a {\it progenerator} if and only if $ \overline{<[P]>}= \mathcal{V}(R)$.

\subsubsection{The Grothendieck group}
The following discussion is basic and well-known \cite{R,Gr}, we included to fix notation and for completeness (until the referee tells us to cut it.)  

\begin{theorem}
\cite[Theorem 1.1.3]{R} Let $S$ be a multiplicative semigroup. There is an abelian (additive) group $G(S)$ (called the 
{\bf Grothendieck group} of $S$) together with a semigroup homomorphism $\iota_S: S \rightarrow G(S)$, such that for any (multiplicative) abelian group $H$, and any semigroup homomorphism $f: S \rightarrow H$, there exists a unique group homomorphism $\hat{f}: G(S) \rightarrow H$ with $f=\hat{f} \circ \iota_S$. 
\end{theorem}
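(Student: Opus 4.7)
The plan is to realize $G(S)$ concretely as a quotient of $S\times S$, thinking of a pair $(a,b)$ as a formal difference ``$a\cdot b^{-1}$'', and then to verify the universal property by sending such a difference to $f(a)f(b)^{-1}$ in $H$.

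First, I would define on $S\times S$ the relation $(a,b)\sim(c,d)$ iff there exists $e\in S$ with $a\,d\,e = b\,c\,e$, the auxiliary factor $e$ being inserted to compensate for the possible lack of cancellation in $S$. I would then verify that $\sim$ is an equivalence relation; reflexivity and symmetry are immediate, while transitivity is the only nontrivial check and requires commutativity of $S$ (which is present in the intended applications, since $\mathcal{V}(R)$ is commutative). I also avoid assuming an identity in $S$ by working entirely with elements produced from $S$ itself.

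Second, I would set $G(S):=(S\times S)/\sim$ and define $[(a,b)]+[(c,d)]:=[(ac,bd)]$ (writing the group operation additively while keeping $S$ multiplicative). The routine checks are: well-definedness on equivalence classes, associativity, commutativity, existence of a neutral element $[(s,s)]$ (shown to be independent of $s\in S$), and inverses $-[(a,b)]=[(b,a)]$. With these in hand, $G(S)$ is an abelian group. I would then define $\iota_S:S\to G(S)$ by $\iota_S(s):=[(s^2,s)]$, equivalently $[(st,t)]$ for any $t\in S$ (all such pairs being equivalent by the definition of $\sim$), and verify that it is a semigroup homomorphism.

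Finally, for the universal property, given any semigroup homomorphism $f:S\to H$ into an abelian group $H$, I would define $\hat{f}:G(S)\to H$ by $\hat{f}([(a,b)]):=f(a)f(b)^{-1}$, and check that it is well-defined on equivalence classes (the witness $e$ in the defining relation cancels once we pass into the group $H$), is a group homomorphism, satisfies $f=\hat{f}\circ\iota_S$, and is unique because $\iota_S(S)$ generates $G(S)$ as a group: indeed $[(a,b)]=\iota_S(a)-\iota_S(b)$. The main obstacle is purely bookkeeping: transitivity of $\sim$, well-definedness of $+$, and well-definedness of $\hat{f}$ each require producing an explicit witness element in $S$ using commutativity. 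There is no deep conceptual content --- the construction is a direct generalization of the passage from $\N$ to $\Z$.
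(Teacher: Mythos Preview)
Your argument is correct for commutative $S$, but it is not the proof the paper gives for this theorem. The paper constructs $G(S)$ as the quotient $\mathbb{Z}S/K_S$ of the free abelian group on the set $S$ by the subgroup $K_S$ generated by all $a+b-ab$; the map $\iota_S$ sends $a$ to its coset $[a]$, and $\hat{f}$ is defined on a formal $\mathbb{Z}$-linear combination by $[\sum n_a a]\mapsto \prod f(a)^{n_a}$. Your construction via formal differences $(S\times S)/\!\sim$ is exactly the alternative model $E(S)$ that the paper introduces \emph{after} the theorem and then shows is naturally isomorphic to $G(S)$; even your map $\iota_S(s)=[(st,t)]$ coincides with the paper's $\phi_x$.

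The trade-off is the one you already flagged: the $\mathbb{Z}S/K_S$ construction works for an arbitrary (not necessarily commutative) semigroup $S$, since abelianization is forced automatically in the quotient ($[ab]=[a]+[b]=[b]+[a]=[ba]$), whereas transitivity of your $\sim$ genuinely needs commutativity. Since the theorem as stated does not assume $S$ commutative, your proof strictly speaking covers only that case; in the paper's applications this is harmless, and indeed the paper restricts to commutative $S$ when it introduces $E(S)$. Conversely, your approach is more concrete and yields the description of elements of $G(S)$ as differences $\iota_S(a)-\iota_S(b)$ immediately, which the paper has to derive separately.
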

\begin{proof}
The Grothendieck group of $S$, $G(S)$ is defined as $\mathbb{Z}S / K_S$ 
where $\mathbb{Z}S $ is a free abelian group with basis $S$, $K_S$ is a subgroup generated by $\{a+b-ab : a, b \in S \}$. The semigroup homomorphism $\iota_S : S \rightarrow G(S)$ maps $a \mapsto [a]=1a+K_S$ . Note that $\iota_S(ab)=[ab] = [a+b]= [a]+[b]=\iota_S(a)+\iota_S(b)$

Given any abelian group $H$ and any semigroup homomorphism $f:  S \longrightarrow H$, the map $\hat{f}:  G(S) \longrightarrow H$ is defined 
as $\displaystyle [\sum n_a a] \mapsto \prod_{a \in S} f(a)^{n_a}$. $\hat{f}$ is a homomorphism since 
$$\hat{f}(a+b-ab)=f(a)f(b)f(ab)^{-1}=1.$$ 

Since $\hat{f}([a])=f \iota_S(a) =f(a)$, the homomorphism $\hat{f}: G(S) \to H$ is determined on the generators of $G(S)$. Hence $\hat{f}$ is unique.
\end{proof}

We list a few relevant %well-known
properties of the Grothendieck group of a semigroup.  

\begin{fact}\label{PropertiesG(S)} Assume $S$, $T$ are multiplicative semigroups.
\begin{enumerate}
\item\label{GFunctor} $G$ is functorial :
 If  $f:S \rightarrow T$ is a semigroup homomorphism, 
then there is a group homomorphism $G(f)$ induced by $f$ between the Grothendieck groups 
$G(S)$ and $G(T)$.  
\item $G$ is left adjoint to the forgetful functor: For any abelian group $H$, we have a 1-1 correspondence %between $Hom(G(S),H)$ group homomorphisms, and $Hom(S,F(H))$ semigroup homomorphisms. 
$Hom(G(S),H) \leftrightarrow Hom(S,F(H))$ given by $f  \mapsto \hat{f}$ and
$g  \mapsto  g \circ \iota_S$ where $F(H)$ is $H$ regarded as a semigroup. 
%$ \xymatrix{
% Hom(G(S),H) \ar@{->}[r]   & Hom(S,H)  \ar@{->}[l]   \\
% f   &  \ar@{|->}[l] \hat{f} \\
%g  \ar@{|->}[r] & g \circ \iota_S}
%$
\item If $f: S \rightarrow T$ is onto, then $G(f): G(S) \rightarrow G(T)$ is also onto.
\item\label{ifScyclic,G(S)cyclic} If $S$ is a semigroup generated by $n$ elements; then $G(S)$ is finitely generated as a group with at most
$n$ generators. In particular, if $S$ is a cyclic semigroup, then $G(S)$ is a cyclic group.
\item\label{G(S)inf-iffSinf} Assume $S$ is a cyclic semigroup, then $S$ is a finite semigroup if and only if $G(S)$ is a finite group. 
\item\label{torsioninStorsioninG(S)} If an element $x \in S$ is torsion, then $\iota_S(x)$ is also torsion in $G(S)$, 
but the converse is not true.
\end{enumerate}
\end{fact}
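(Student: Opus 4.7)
The plan is to dispatch each of the six items by leaning on the universal property of $G(S)$ just proved, and to handle the counterexample in (vi) by an explicit construction. For (i), I would feed the composition $\iota_T \circ f : S \to G(T)$ into the universal property of $G(S)$: this yields a unique homomorphism $G(f) : G(S) \to G(T)$, and the functorial identities $G(\mathrm{id}) = \mathrm{id}$ and $G(g \circ f) = G(g) \circ G(f)$ follow from the uniqueness clause. For (ii), the stated bijection is essentially built in: one direction restricts $\hat f$ along $\iota_S$, the other extends $f$ via the universal property, and mutual inversion is again uniqueness; naturality in $S$ and in $H$ is a routine diagram chase.

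Items (iii) and (iv) rest on the observation that $\iota_S(S)$ generates $G(S)$ as an abelian group, which is immediate from the presentation $G(S) = \Z S / K_S$. If $f : S \to T$ is onto, then every $\iota_T(t)$ has the form $G(f)(\iota_S(s))$, so the image of $G(f)$ contains a generating set of $G(T)$. If $S$ is generated as a semigroup by $x_1, \ldots, x_n$, then $\iota_S(S)$ lies in the subgroup $\Z[x_1] + \cdots + \Z[x_n]$, so $[x_1], \ldots, [x_n]$ generate $G(S)$; the cyclic case is the specialization $n = 1$.

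For (v), cyclicity of $S = \langle x \rangle$ already gives $G(S) = \langle [x] \rangle$ by (iv). If $S$ is moreover finite then $x$ is torsion of some type $(m, n)$, hence $(n - m)[x] = 0$ and $G(S)$ is finite; conversely, an infinite cyclic $S$ is isomorphic to $(\N, +)$, so $G(S) \cong \Z$ is infinite. The first half of (vi) is immediate: type $(m, n)$ gives $(n - m)\iota_S(x) = 0$. For the converse-failure I would take $S$ generated by $x, y$ subject to the two relations $x + y = 2y$ and $x + y = 3y$; in $G(S)$ these collapse to $x = 0$, so $\iota_S(x)$ is torsion, while in $S$ neither relation nor any of its translates can be applied to a point of the form $(n, 0)$ (each side involves a positive $y$-coordinate), so the multiples $x, 2x, 3x, \ldots$ stay distinct and $x$ is of infinite type.

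The main obstacle will be this last check: rigorously verifying in the geometric representation of Section 2.1 that each pure multiple $(n, 0)$ sits in its own connected component under the translated relations. Everything else is a straightforward unwinding of the universal property.
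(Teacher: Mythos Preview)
Your proposal is correct in all six parts. Note, however, that the paper does not actually supply a proof of this Fact: it is stated as a list of basic properties (with a citation to Rosenberg and Grillet) and left to the reader, so there is no argument in the paper to compare items (i)--(v) against. Your derivations from the universal property and from the presentation $G(S)=\Z S/K_S$ are the expected ones.

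For the counterexample in (vi) there \emph{is} something to compare: the paper handles it not inside the Fact but immediately afterwards, in Example~\ref{ExG(S)}(iii), using the Toeplitz semigroup $T=\langle v,w\mid v=v+w\rangle$ of Example~\ref{geoToeplitz}. There $w$ has infinite type (every lattice point $(0,n)$ on the $w$-axis is isolated, since every edge and every translate of it has $v$-coordinate at least $1$), yet $\iota_T(w)=0$ in $G(T)\cong\Z$. Your two-relation example $\langle x,y\mid x+y=2y,\ x+y=3y\rangle$ is also valid --- the points $(n,0)$ are indeed isolated because every endpoint of every translated edge has $y$-coordinate at least $1$, and in $G(S)$ one gets $[x]=[y]=0$ --- but it is less economical: you introduce two relations where one suffices, and you collapse $G(S)$ to the trivial group rather than exhibiting a torsion image inside a nontrivial $G(S)$. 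The paper's choice has the bonus of tying the counterexample to a running example (the Toeplitz algebra) used later in Section~4.
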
     
 Now for a given semigroup $S$, construct $S^+=S \cup \{ *\}$ as the monoid where multiplication on $S$ extends to $S^+$ with   
 $* \cdot s = s = s \cdot *$ for all $s \in S^+$.         
\begin{lemma}\label{SfgG(S)} Assume $S$ is a commutative multiplicative semigroup and $S^+$ is the monoid defined above. 
\begin{enumerate}
\item If $S$ is an abelian group, $\iota_S: S \to G(S)$ is an isomorphism. 
\item For any semigroup $S$, we have $G(S) \cong G(S^+)$. 
%\item If $S$ is a commutative group, $G(S) \cong G(S^+) \cong S$.
\item Suppose $M$ is a commutative multiplicative monoid for which $S= M - \{1\}$ is a group. Then $G(M) \cong S$. 
\end{enumerate}
\end{lemma}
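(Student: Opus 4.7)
My plan is to prove all three parts directly from the universal property of the Grothendieck group, with (iii) following formally from (i) and (ii).

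For (i), since $S$ is already an abelian group, $\mathrm{id}_S : S \to S$ is a semigroup homomorphism into an abelian group, so the universal property produces a unique group homomorphism $\pi : G(S) \to S$ with $\pi \circ \iota_S = \mathrm{id}_S$. This makes $\iota_S$ injective. For surjectivity, I would apply the uniqueness clause of the universal property to $\iota_S : S \to G(S)$ itself: both $\iota_S \circ \pi$ and $\mathrm{id}_{G(S)}$ are group endomorphisms of $G(S)$ whose precomposition with $\iota_S$ equals $\iota_S$, so they must coincide.

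For (ii), the plan is to build mutually inverse maps between $G(S)$ and $G(S^+)$. The inclusion $j : S \hookrightarrow S^+$ is a semigroup homomorphism and, by functoriality of $G$ (Fact \ref{PropertiesG(S)}), induces $G(j) : G(S) \to G(S^+)$. In the reverse direction, extend $\iota_S$ to $\varphi : S^+ \to G(S)$ by declaring $\varphi(*) = 0$; this is a semigroup homomorphism since $*$ is the identity of $S^+$ and $0$ is the identity of $G(S)$, and it therefore induces $\hat\varphi : G(S^+) \to G(S)$. Checking on generators that $G(j)$ and $\hat\varphi$ are mutually inverse then finishes the argument.

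For (iii), the key observation is that $M$ is $S^+$ in disguise. The hypothesis $S = M \setminus \{1\}$ forces the monoid identity $1$ of $M$ to be distinct from the group identity of $S$, and since $1 \cdot m = m$ for every $m \in M$ while the multiplication of $M$ restricted to $S$ is just the group law on $S$, the element $1 \in M$ plays precisely the role of the adjoined identity $*$ in the construction of $S^+$. Thus $M \cong S^+$ as monoids, and combining (ii) with (i) yields $G(M) \cong G(S^+) \cong G(S) \cong S$. The main obstacle, though mild, is exactly this identification: one must notice that the hypothesis places two genuinely distinct elements in $M$ that each behave like an identity on $S$, and that this coincides with the $S^+$ construction, rather than being tempted to compute $G(M)$ directly from a presentation.
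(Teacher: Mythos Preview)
Your proposal is correct and follows essentially the same approach as the paper: in (i) both use the universal property applied to $\mathrm{id}_S$ to get a retraction, in (ii) both build the inverse pair via the inclusion $S\hookrightarrow S^+$ and the extension of $\iota_S$ sending $*\mapsto 0$, and in (iii) both reduce to (i) and (ii) by recognizing $M$ as $S^+$. The only noteworthy difference is that for surjectivity in (i) the paper invokes the fact that $G(S)$ is generated by $\iota_S(S)$, whereas you instead use the uniqueness clause of the universal property to conclude $\iota_S\circ\pi=\mathrm{id}_{G(S)}$; both are valid, and your version has the mild advantage of not appealing to the explicit construction of $G(S)$.
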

              \begin{proof}
              \begin{enumerate}
\item Look at $id: S \to S$. Now by the definition of $G(S)$, the following diagram commutes
$$ \xymatrix{ 
 S \ar@{->}_{\iota_S}[d] \ar@{->}^{id}[r]  & S   &  &   a \ar@{|->}_{\iota_S}[d] \ar@{|->}^{id}[r]  & a  \\
G(S) \ar@{.>}_{\widehat{\iota_S}}[ur]  & &  &  [a] \ar@{|.>}_{\widehat{\iota_S}}[ur]  &  } $$ 
As $\widehat{\iota_S} \circ \iota_S = id$, $\iota_S$ is 1-1.  
Since $G(S)$ is generated by $\iota_S(S)$ and $S$ is a group, $\iota_S$ is onto. 
\item 
$S$ is a semigroup, $S^+=S \cup \{ *\}$ is a monoid  where $* \cdot s = s = s \cdot *$ and $* \cdot * = *$\\
Define  $\alpha: S \rightarrow S^+$ as $ s \mapsto s $, and define a monoid homomorphism $\beta : S^+ \rightarrow G(S) $ as 
$\beta : s \mapsto [s]$ and $\beta : * \mapsto 0$. Then the diagram below commutes 
$$ \xymatrix{ 
 S^+ \ar@{->}[r]^{\beta} \ar@{->}[d]  & G(S)  \ar@{->}[d] \\
   G(S^+) \ar@{->}[r]^{\widehat{\iota_{\beta}}} & G(S) }
$$
Consider the following diagrams where there exists a unique $\overset{\sim}{\iota_S}$ by the universality of $S^+$ and there exists a unique $\widehat{\overset{\sim}{\iota_S}}$ by the universality of $G$, 
$$ \xymatrix{ 
 S \ar@{->}[r]^{\iota_S}   \ar@{->}[d]_{\alpha_S}   & G(S)    & & S \ar@{->}[r]^{\iota_S}   \ar@{->}[d]_{\alpha_S} & G(S) 
 \\ S^+ \ar@{.>}[ur]_{\overset{\sim}{\iota_S}}  &     & &    
S^+ \ar@{.>}[ur]_{\overset{\sim}{\iota_S}} \ar@{->}[r]_{\iota_{S^+}} & G(S^+)  \ar@{.>}[u]_{\widehat{\overset{\sim}{ \iota_S}}}  }
$$
Now, in the following diagram,  
$$  \xymatrix{ S \ar@{^{(}->}[r]^{\alpha} 
\ar@{->}[d]^{\iota_S} & S^+  \ar@{->}[d]^{\iota_{S^+}} & \\
G(S) \ar@{->}[r]^{G(\alpha)}  & G(S^+) \ar@{->}[r]^{\overset{\sim}{\widehat{\iota_S}}} & G(S) }$$ 
we have $[s]^+ \mapsto [s]$ and $[s] \mapsto [s]^+$.
Both $G(S)$ and $G(S^+)$ are generated by $S$, $G(S) = \{ s+K_S=[s] \}$ and $G(S^+)=\{ s+K_{S^+}=[s]^+ \} $. Further, 
$* = * + * - ** \in K_{S^+}$, so $*+K_{S^+}=[*]^+=0$. Hence, 
$ G(\alpha_S) \circ \widehat{{\iota_S}} = id_{G(S^+)}$ and $ \widehat{{\iota_S}} \circ G(\alpha_S)= id_{G(S)}$.
Then $G(S) \cong G(S^+)$.
%\item Immediate from part (i) and (ii).
\item Let $S = M - \{1\}$, then $M=S^+$. The result follows from the parts above.
              \end{enumerate}
              \end{proof}
Some (counter)examples:
\begin{example}\label{ExG(S)}
\begin{enumerate}
\item $\iota:S \rightarrow G(S)$ is not necessarily one-to-one. For instance: \\
Let $S= <a: na=ma>$ where $n,m$ are distinct positive integers with $n>m>1$. 
Then $G(S)= \mathbb{Z} / (n-m)\mathbb{Z}$ and $|S|=n-1 > |G(S)|=n-m$.
Hence, $\iota_S$ is not 1-1.

\item \label{1-1} If $f: S \rightarrow T$ is 1-1, then $G(f): G(S) \rightarrow G(T)$ may not be 1-1 : 
Consider the additive semigroups $S=\mathbb{N} $  and $T= < v, \, w \, | \, v = v+w>$. 
($T$ is the semigroup of Example \ref{geoToeplitz}.) Let $f:S \rightarrow T$ be 
the semigroup monomorphism defined as $f(1) = w $, $f(n) = nw $. 
Then both $G(S)$ and $G(T)$ are $\mathbb{Z} $, but the group homomorphism $G(f)$ induced by $f$ between the Grothendieck groups 
$G(S)$ and $G(T)$ is the zero map which is not injective. However, if we take $S= \mathcal{U}(R)$ and $T=\mathcal{V}(R)$ for any ring $R$, then 
$G(f) : G(\mathcal{U}(R)) \rightarrow \mathcal{K}_0(R)$ is also a monomorphism which we will prove shortly in Proposition 
\ref{G(U)embedG(V)}. 

\item Let $a \in T$ be an element with infinite order, then it is possible that $\iota(a)$ has finite order: 
In the example above, $w  \in T$ is not torsion and $\iota(v) = \iota(v+w) = \iota(v) + \iota(w)$ in $G(T)$.
Hence, $\iota(w)=0$ and $\iota(w)$ is torsion in $G(T)=\mathbb{Z}$.
\end{enumerate}
\end{example}
We will proceed with an alternative construction of the Grothendieck group of a commutative semigroup which will also be used in the sequel. 
Let $S$ be a non-empty commutative semigroup, then 
%If $S = \emptyset$, then $G(S)$ is defined as the trivial group. Otherwise, 
the relation $\sim$ on $S \times S$, defined as $(a,b) \sim (c,d)$ if and only if there exists $x \in S$ such that $adx=bcx$, is an equivalence relation. The set of all equivalence classes of $S \times S$ is denoted by $E(S)$, and the operation $\cdot$ on $E(S)$ is
defined as $[(a,b)] \cdot [(c,d)] = [(ac,bd)]$. For any $\delta \in S$, $[(\delta,\delta)]$ is the identity element of $\cdot$, and 
$[(a,b)]$ is the inverse element for $[(b,a)]$. 
Hence, $E(S)$ is an abelian group where commutativity of $E(S)$ follows from the commutativity of $S$. 
Fix an $x \in S$, define $\phi_x: S \rightarrow E(S)$ which maps $a \in S$ to $[(ax, x)] \in E(S)$. 
Now, $\phi_x$ is a semigroup homomorphism. For any $x,y \in S$, $\phi_x = \phi_y$; hence $\phi$ is defined 
independent of the choice of $x \in S$. Also, consider the map $\hat{\phi}: G(S) \rightarrow E(S)$ induced by $\phi$ which maps 
$a+K_S \mapsto [(ax, x)]$.

Now, define $f: E(S) \rightarrow G(S)$ as $[(a, b)] \mapsto a-b+K_S$. So $f$ is  clearly a group homomorphism. It is straightforward to show that $f \circ \hat{\phi}$ and $\hat{\phi} \circ f$ are identity maps, i.e., 

 $[a,b] \mapsto a-b+K_S \mapsto [ax,x] [bx,x]^{-1}= [ax,x] [x,bx] = [ax^2,bx^2]= [a,b]$ and also,
  
$ a+K_S \mapsto [ax,x] \mapsto ax-x+K_S = a+x-x+K_S=a+K_S$.
So $E(S), G(S)$ are (naturally and canonically) isomorphic groups. We are going to identify $E(S)$ with $G(S)$ and use them interchangibly from now on. 

Note that $(\mathcal{V}^*(R))^+ = \mathcal{V}(R)$ and
recall that $\mathcal{K}_0(R)$ is defined as the Grothendieck group of the monoid $\mathcal{V}(R)$. 

Compare the following proposition with Example \ref{ExG(S)} part (\ref{1-1}).
\begin{proposition}\label{G(U)embedG(V)}
For any ring $R$, consider $\mathcal{U}(R)$ and $\mathcal{V}(R)$ as defined above, and $f$ the inclusion map, then the induced map
$G(f) : G(\mathcal{U}(R)) \rightarrow \mathcal{K}_0(R)$ is injective. 
%ie. $\hat{f} : G(S) \rightarrow G(T)$ is also a monomorphism.
\end{proposition}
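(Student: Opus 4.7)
The plan is to use the $E(S)$ description of the Grothendieck group established just above the statement, translated into the additive language of $\mathcal{V}(R)$. An element of $G(\mathcal{U}(R))$ is represented by a formal difference $[R^a] - [R^b]$, and two such representatives $[R^a]-[R^b]$ and $[R^c]-[R^d]$ coincide in $G(\mathcal{U}(R))$ if and only if there is a nonnegative integer $k$ with
\[
R^{a+d+k} \cong R^{b+c+k},
\]
while they coincide in $\mathcal{K}_0(R) = G(\mathcal{V}(R))$ if and only if there is a finitely generated projective $R$-module $P$ with
\[
R^{a+d} \oplus P \cong R^{b+c} \oplus P.
\]
Since $G(f)$ is a group homomorphism, injectivity is equivalent to the triviality of its kernel, so it suffices to show that whenever such a $P$ exists, some such $k$ exists.

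The key step is the standard fact that every finitely generated projective $R$-module is a direct summand of some finitely generated free module: if $P$ is finitely generated projective, then there exist a nonnegative integer $k$ and an $R$-module $Q$ with $P \oplus Q \cong R^k$. Starting from $R^{a+d} \oplus P \cong R^{b+c} \oplus P$, I would add $Q$ to both sides to obtain
\[
R^{a+d} \oplus R^k \cong R^{a+d} \oplus P \oplus Q \cong R^{b+c} \oplus P \oplus Q \cong R^{b+c} \oplus R^k,
\]
which is precisely $R^{a+d+k} \cong R^{b+c+k}$. By the description above this forces $[R^a]-[R^b] = [R^c]-[R^d]$ already in $G(\mathcal{U}(R))$, so $\ker G(f) = 0$.

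There is no serious obstacle here; the proof is essentially a bookkeeping argument comparing the two equivalence relations defined by $E(\mathcal{U}(R))$ and $E(\mathcal{V}(R))$, with the only non-formal input being the "projective summand of a free" characterization of finitely generated projectives. The mild care needed is to handle the empty case correctly (e.g., the equivalence in $E(S)$ is stated for semigroups and uses an auxiliary element $x$, which here can be taken as $[R]$ itself), and to observe that the $x$ appearing in the semigroup equivalence on the $\mathcal{U}(R)$ side can be absorbed into $k$ without changing the argument. Example \ref{ExG(S)}(\ref{1-1}) shows that the inclusion of an arbitrary subsemigroup need not induce an injection on Grothendieck groups, so it is really the freeness-summand property of $\mathcal{U}(R)$ inside $\mathcal{V}(R)$ that makes the argument work.
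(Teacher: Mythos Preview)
Your proof is correct and follows essentially the same approach as the paper: both use the $E(S)$ description of the Grothendieck group and reduce the question to showing that a projective witness $P$ in the stable equivalence on $\mathcal{V}(R)$ can be traded for a free witness $R^k$ by adding a complement $Q$ with $P\oplus Q\cong R^k$. The only cosmetic difference is that the paper exploits the fact that $G(\mathcal{U}(R))$ is cyclic to work directly with an element $m[R]$ in the kernel, whereas you compare two arbitrary representatives $[R^a]-[R^b]$ and $[R^c]-[R^d]$; the underlying ``add the complement'' step is identical.
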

\begin{proof}
Let us take an element $m[R]_{G(\mathcal{U}(R))}$ for some $n \in \Z$, from the kernel of $G(f)$.
WLOG we may choose $n \in N$ as $G(\mathcal{U}(R))$ is a group. Then
$ 0= G(f)(m[R]_{G(\mathcal{U}(R))})  = m[R]_{G(T)}$. 
We have the following commutative diagram  
$$ \xymatrix{ 
 S \ar@{^{(}->}[r]^f \ar@{->}[d]  & \mathcal{V}(R)  \ar@{->}[d]   & m[R]\neq 0 \ar@{|->}[r]^f \ar@{|->}[d]  & m[R]  \ar@{|->}[d] \\
 G(\mathcal{U}(R)) \ar@{->}[r]^{G(f)}   &  \mathcal{K}_0(R)  &  m[R]_{G(\mathcal{U}(R))} \ar@{|->}[r]^{G(f)}   &  [(m[R],0)] = [(0,0)]}
$$
We want to show that $m[R]_{G(\mathcal{U}(R))} =0 $.

$m[R] + 0 + [X]= 0+ [X] $  in $ \mathcal{V}(R)$, that is, $R^m \oplus X \cong X$ for some finitely 
generated projective module over $R$.
Then there exists an  $R$-module $Y$ such that $X \oplus Y = R^n$ for some $n \in \N$.
$R^m \oplus X \oplus Y \cong X \oplus Y$, i.e., $R^m \oplus R^n \cong R^n$, so 
$(n+m) [R] = n[R]$ in $\mathcal{V}(R)$, hence in $\mathcal{U}(R)$. Chasing the diagram we get 
$(n+m) [R]_{G(\mathcal{U}(R))} = n[R]_{G(\mathcal{U}(R))}$ in $G(\mathcal{U}(R))$. Since $G(\mathcal{U}(R))$ 
is a group we have inverses and get
$m[R]_{G(\mathcal{U}(R))} =0 $. Hence, $G(f)$ is a monomorphism. 

\end{proof}
For the isomorphism class of the free module of rank one $[R] \in \mathcal{V}(R)$, we use the following notation. 
Let $\iota([R]):=[R]_{\Z} \in \mathcal{K}_0(R)$ and $[R]_{\Q}:= [R]_{\Z} \otimes_\Z 1 \in \mathcal{K}_0(R) \otimes_\Z \Q$.
If an element $[P]$ is torsion in $\mathcal{V}^*(R)$, then $\iota([P])$ has finite order in 
$\mathcal{K}_0(R)$.

\medskip 

\subsubsection{Graph Semigroups} 
$\Gamma = (E, V, s, t)$ will denote a {\bf digraph} with vertex set $V$, edge set $E$, source and target functions 
$s$, $t: E \rightarrow V$. 
A {\bf sink} is a vertex $v$ for which the outgoing edge set $s^{-1}(v)$ 
is empty and a {\bf source} is a vertex $v$ for which the incoming edge set $t^{-1}(v)$ is empty. In particular, an isolated vertex is both a source and a sink. Throughout this note we will assume $\Gamma$ is a finite digraph, that is both $V$ and $E$ are finite. 

A {\bf separated graph} $(\Gamma,\Pi)$ is a digraph $\Gamma$ together with a partition $\Pi$ of $E$ 
which is finer than $\{s^{-1}(v) | v \in V, \, v \text{ not a sink} \}$. 

For a given triple $(\Gamma,\Pi,\Lambda)$ where $\Lambda \subseteq \Pi$, $S(\Gamma,\Pi,\Lambda)$ is defined to be the commutative semigroup generated by $\Omega:= V \sqcup ( \Pi \backslash \Lambda)$ subject to the relations $\{ R_X\}_{X \in \Pi}$ such that
$$sX = \sum_{ e\in X } t(e) \quad \mbox{ for every } X \in \Lambda, \mbox{ and}$$
$$sX = X + \sum_{ f\in X } t(f) \quad \mbox{ for every } X \in \Pi \backslash \Lambda.$$ 
The Grothendieck group of $\N\Omega  \setminus \{{ \mathbf 0} \}$ is $\Z\Omega$.
The Grothendieck group of $S(\Gamma,\Pi, \Lambda)$ is $\Z\Omega / f(R_X)$  where $X \in \Pi$ which we will denote as $S_{\Z}(\Gamma,\Pi, \Lambda)$. Notice also that $f(R_X)$ is exactly the $\Z$-span of the relations $R_X$. 
\begin{example}\label{L(2,5)semigroup}
Consider the following separated graph $\Gamma$ with vertex set $V=\{v,w\}$ and edge set $E=\{e_1,e_2,f_1,f_2,f_3,f_4,f_5\}$ with $s(e)=v$ 
and $t(e)=w$ for all $e \in E$, $\Pi = \{ X,Y\}$ where $X=\{e_1,e_2\}$, $Y=\{f_1,f_2,f_3,f_4,f_5\}$ and $\Lambda = \Pi$. We marked the edges in $X$ with red, and the edges in $Y$ with blue in the graph. The set $\Omega = V $ and $v=2w$ and $v=5w$ are the relations on 
$\N\Omega  \setminus \{{ \mathbf 0} \}$.
 
\begin{figure}[h]
\begin{tikzpicture}
%\draw[->, blue, thick] (-0.9,0.05) -- (0.9,1);
%\draw[->, red, thick] (-0.9,-0.05) -- (0.9,-1);
%\draw [fill=black] (-1,0) circle [radius=0.05];
\draw [fill=black] (1,1) circle [radius=0.05];
\draw [fill=black] (1,-1) circle [radius=0.05];
%\node at (-1.2,0) {$u$};
\node at (1,1.2) {$w$};
\node at (1,-1.2) {$v$};
\draw[->, blue, thick] (1,-0.95) to [out=90,in=270] (1,0.6);
\draw[->, blue, thick] (1.05,-0.95) to [out=75,in=310] (1.02,0.7);
\draw[->, blue, thick] (1.05,-0.95) to [out=30,in=330] (1.05,0.9);
\draw[->, blue, thick] (0.98,-0.95) to [out=115,in=235] (0.95,0.7);
\draw[->, blue, thick] (1.05,-0.95) to [out=340,in=0] (1.15,1);
\draw[->, red, thick] (0.95,-0.95) to [out=150,in=200] (0.95,0.9);
\draw[->, red, thick] (0.95,-0.95) to [out=180,in=180] (0.90,1);
\end{tikzpicture}
\end{figure}
\end{example}
The graph semigroup $S(\Gamma, \Pi)=S(\Gamma, \Pi,\Pi)$ is the semigroup of Example \ref{geoL(2,5)}.

\begin{example}\label{Toeplitz}
Consider the following digraph $\Gamma$: 
$$ \xymatrix{\bullet^{v} \ar@(ul,dl) \ar[r] &  \bullet^w  }$$ where $\Pi = \{E\} $ and $\Lambda=\Pi$. 
Now $\Omega = V $ and $v=v + w$ is the only relation. %on $\N\Omega  \setminus \{{ \mathbf 0} \}$. 
The graph semigroup is the semigroup of Example \ref{geoToeplitz}.

\end{example}
\section{Separated Cohn-Leavitt Path Algebras and IBN}
We can rephrase IBN property for a ring $R$ in terms of $\mathcal{V}^*(R)$ as follows: 
$R$ has IBN if and only if for every pair of distinct positive integers $m\neq m'$ we have $m[R] \neq m'[R]$ as elements of $\mathcal{V}^*(R)$. Hence, $[R]$ has infinite order in $\mathcal{V}^*(R)$ if and only if $R$ is IBN. Moreover, $[R]$ has infinite order in $\mathcal{U}(R)$. 
For any ring $R$, the property of having IBN is detected by $\mathcal{K}_0(R)$.
\begin{proposition} \label{RIBNU(R)inf}\cite[Corollary 5.7]{AGP}For any ring $R$, the followings are equivalent:

(1) $R$ has IBN; 

(2) $\mathcal{U}(R)$ is infinite; 
 
(3) $[R]_{\Q}$ is nonzero in $\mathcal{K}_0(R)\otimes \mathbb{Q}$.
\end{proposition}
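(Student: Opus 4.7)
The plan is to prove the chain $(1) \Leftrightarrow (2) \Rightarrow (3) \Rightarrow (1)$, making essential use of Proposition \ref{G(U)embedG(V)} and Fact \ref{PropertiesG(S)} for the arithmetic step from $\mathcal{U}(R)$ to $\mathcal{K}_0(R)\otimes\Q$.

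For $(1) \Leftrightarrow (2)$, I would unpack definitions. The monoid $\mathcal{U}(R)$ is the cyclic submonoid $\{n[R] : n\geq 0\}$ of $\mathcal{V}(R)$ generated by $[R]$. By definition, $R$ has IBN precisely when $R^m \not\cong R^n$ for distinct positive integers $m \neq n$, i.e.\ when $m[R] \neq n[R]$ in $\mathcal{V}(R)$ for $m\neq n$, which is the condition that the cyclic monoid $\mathcal{U}(R)$ has all of its powers of the generator distinct. This is exactly the assertion that $\mathcal{U}(R)$ is infinite (equivalently, $[R]$ has infinite type in $\mathcal{V}^*(R)$).

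For $(2) \Rightarrow (3)$, I would invoke Proposition \ref{G(U)embedG(V)}, which tells us that the natural map $G(\mathcal{U}(R)) \to \mathcal{K}_0(R)$ is injective. Since $\mathcal{U}(R)$ is cyclic, Fact \ref{PropertiesG(S)}\,(\ref{ifScyclic,G(S)cyclic}) and (\ref{G(S)inf-iffSinf}) imply that $G(\mathcal{U}(R))$ is a cyclic group, and it is infinite because $\mathcal{U}(R)$ is infinite. Hence $G(\mathcal{U}(R))\cong\Z$, generated by $\iota([R])=[R]_\Z$. The injection into $\mathcal{K}_0(R)$ shows that $[R]_\Z$ has infinite order in $\mathcal{K}_0(R)$, so it remains non-torsion after tensoring with $\Q$, giving $[R]_\Q \neq 0$ in $\mathcal{K}_0(R)\otimes\Q$.

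For $(3) \Rightarrow (1)$, I would argue by contrapositive: suppose $R$ does not have IBN, so $R^m \cong R^n$ for some positive integers $m > n$. Then $m[R]=n[R]$ in $\mathcal{V}(R)$, and passing to $\mathcal{K}_0(R)$ yields $(m-n)[R]_\Z = 0$, so $[R]_\Z$ is torsion. Tensoring with $\Q$ kills torsion, giving $[R]_\Q=0$ in $\mathcal{K}_0(R)\otimes\Q$. This completes the cycle of implications.

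The only slightly non-trivial step is $(2)\Rightarrow (3)$, which relies crucially on the embedding of Proposition \ref{G(U)embedG(V)}; without it, one might worry that $[R]$ could be non-torsion in $\mathcal{U}(R)$ yet become torsion in $\mathcal{K}_0(R)$ after adjoining inverses for all of $\mathcal{V}(R)$, exactly as happens in Example \ref{ExG(S)}\,(\ref{1-1}). The rest is essentially bookkeeping with definitions.
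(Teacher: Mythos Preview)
Your proof is correct and follows essentially the same route as the paper: the equivalence $(1)\Leftrightarrow(2)$ is definitional, and the link to $\mathcal{K}_0(R)\otimes\Q$ goes through Proposition~\ref{G(U)embedG(V)} together with Fact~\ref{PropertiesG(S)}\,(\ref{ifScyclic,G(S)cyclic}),(\ref{G(S)inf-iffSinf}). The only cosmetic difference is that for $\neg(2)\Rightarrow\neg(3)$ the paper routes the torsion computation through $G(\mathcal{U}(R))$ to pin down the exact order $n-m$ of $[R]_\Z$, whereas you pass directly from $\mathcal{V}(R)$ to $\mathcal{K}_0(R)$; either way suffices for the statement.
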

\begin{proof}
Now, $R$ has IBN if and only if $\mathcal{U}(R)$ is infinite is clear from the definitions. By Proposition \ref{G(U)embedG(V)},  
$G(\mathcal{U}(R)) \rightarrow \mathcal{K}_0(R)$ induced from the semigroup inclusion 
$\mathcal{U}(R) \rightarrow \mathcal{V}(R)$ is injective. Now, if $\mathcal{U}(R)$ is an infinite cyclic semigroup then its Grothendieck group 
$G(\mathcal{U}(R))$ is an infinite cyclic group generated by $[R]$ by Fact \ref{PropertiesG(S)} parts (\ref{ifScyclic,G(S)cyclic}) and (\ref{G(S)inf-iffSinf}). So
$[R]_{\mathbb{Z}}$ in $\mathcal{K}_0(R)$ also has infinite order. Hence $[R]_{\mathbb{Q}}$ in $\mathcal{K}_0(R)\otimes \mathbb{Q}$ is nonzero. 

If $\mathcal{U}(R)$ is a finite cyclic semigroup of type $(m,n)$, then $[R]_{\mathbb{Z}}$ in $G(\mathcal{U}(R))$ generates a finite cyclic group of order $n-m$, so $[R]_{\mathbb{Z}}$ in 
$\mathcal{K}_0(R)$ also has order $n-m (> 0)$ and its image in $\mathcal{K}_0(R)\otimes \mathbb{Q}$ is 0. 
\end{proof}
\begin{definition}\label{definition}  
{\rm For a given triple $(\Gamma,\Pi,\Lambda)$, the {\bf separated Cohn-Leavitt path algebra} over the field $K$, denoted by  
$CL_K(\Gamma,\Pi,\Lambda)$, is the algebra which is generated by the sets
$\{v\mid v\in V\}$, $\{e,e^*\mid e\in E\}$, which satisfy the following relations:

(V)  $vw = \delta_{v,w}v$ for all $v,w\in V$, \  %(i.e., $\{v\mid v\in E^0\}$ is a set of orthogonal idempotents),

(E1) $s(e)e=et(e)=e$ for all $e\in E$,

(E2)  $t(e)e^*=e^*s(e)=e^*$ for all $e\in E$, and 

(CK1)  For all $X \in \Pi$, $e^*f=\delta _{ef}t(e)$ for every $e,f\in X$.

(CK2)  For all $X \in \Lambda$, $sX=\sum _{\{ e\in X \}}ee^*$ for every $e \in X$. \\ 

For ease of notation we will drop $K$, and use $CL(\Gamma,\Pi,\Lambda)$. }
\end{definition}

If $\Lambda = \Pi$, then $L(\Gamma,\Pi):=CL(\Gamma,\Pi,\Pi)$ is the {\bf separated Leavitt path algebra} over the separated graph 
$(\Gamma,\Pi)$; 

if $\Lambda = \emptyset$, then $C(\Gamma,\Pi):=CL(\Gamma,\Pi,\emptyset)$ is the {\bf separated Cohn path algebra} over the separated graph 
$(\Gamma,\Pi)$;

if $\Lambda = \Pi = \{s^{-1}(v) | v \in V, \, v \text{ not a sink} \}$, then $L(\Gamma):=CL(\Gamma,\Pi,\Pi)$ is the {\bf Leavitt path algebra} over the graph $\Gamma$, and 

if $\Lambda = \emptyset$ and $\Pi =\{s^{-1}(v) | v \in V, \, v \text{ not a sink} \}$, then  $C(\Gamma):=CL(\Gamma,\Pi,\emptyset)$ is the {\bf Cohn path algebra} over the graph $\Gamma$. 

\medskip
 
The connection between the graph semigroup and the non-stable $\mathcal{K}$-theory of the separated Cohn-Leavitt path algebra 
$CL(\Gamma, \Pi, \Lambda)$ is provided by the following lemma.
\begin{lemma}\label{iso}
For a given triple $(\Gamma, \Pi,\Lambda)$, let $L$ denote $CL(\Gamma, \Pi, \Lambda)$. Then 
$$(sX)L \cong \bigoplus_{e \in X} t(e)L  \qquad \mbox{ for }  X \in \Lambda,$$ 
$$(sY)L \cong \bigoplus_{e \in Y} t(e)L \bigoplus (sY - \sum_{e \in Y} ee^*)L \qquad \mbox{ for }  Y \in \Pi \backslash \Lambda.$$ 
\end{lemma}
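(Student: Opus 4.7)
The strategy is to produce explicit mutually inverse right $L$-module homomorphisms in each direction, with the algebra relations (V), (E1), (E2), (CK1), (CK2) of Definition \ref{definition} doing all the work.

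For the first statement, fix $X\in\Lambda$ and set $v:=sX$, which is an idempotent by (V); note $s(e)=v$ for every $e\in X$ by the definition of a separated graph. I would define
$$\phi\colon vL\longrightarrow\bigoplus_{e\in X}t(e)L,\qquad \phi(a)=\bigl(e^*a\bigr)_{e\in X},$$
and
$$\psi\colon\bigoplus_{e\in X}t(e)L\longrightarrow vL,\qquad \psi\bigl((a_e)_{e\in X}\bigr)=\sum_{e\in X}ea_e.$$
Well-definedness of $\phi$ uses (E2): $t(e)\cdot e^*a=e^*a$, so each coordinate lands in $t(e)L$. Well-definedness of $\psi$ uses (E1): $v\cdot ea_e=s(e)ea_e=ea_e$. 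The composition $\phi\circ\psi$ is the identity by (CK1): the $f$-coordinate of $\phi(\psi((a_e)))$ is $\sum_{e\in X}f^*ea_e=\sum_{e\in X}\delta_{ef}t(e)a_e=t(f)a_f=a_f$. The composition $\psi\circ\phi$ is the identity precisely because $X\in\Lambda$, so (CK2) applies: $\psi(\phi(a))=\sum_{e\in X}ee^*a=(sX)a=va=a$ for $a\in vL$.

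For the second statement, fix $Y\in\Pi\setminus\Lambda$, set $v:=sY$, and put $q:=v-\sum_{e\in Y}ee^*$. The plan is to show that $\{q\}\cup\{ee^*\mid e\in Y\}$ is a family of pairwise orthogonal idempotents summing to $v$; this yields the direct-sum decomposition
$$vL=qL\;\oplus\;\bigoplus_{e\in Y}ee^*L,$$
and then to identify each summand $ee^*L$ with $t(e)L$. Orthogonality and idempotency reduce to (CK1): $(ee^*)(ff^*)=e(e^*f)f^*=\delta_{ef}\,ee^*$, and $q\cdot ee^*=0=ee^*\cdot q$ since $v\cdot ee^*=ee^*$. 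For the identification $ee^*L\cong t(e)L$, use the maps $a\mapsto e^*a$ and $b\mapsto eb$; these are mutually inverse right $L$-module homomorphisms on $ee^*L$ and $t(e)L$, respectively, again by (CK1) and (E1)/(E2).

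The main place care is needed is in the second statement: one must check that the ``defect idempotent'' $q$ really is idempotent and orthogonal to each $ee^*$, since that is exactly what makes the decomposition of $vL$ hold whenever (CK2) fails for $Y$. Everything else is a direct unwinding of the relations, and the key conceptual point is that (CK2) is precisely what kills the extra summand $qL$ when $Y\in\Lambda$, recovering the first isomorphism as the special case $q=0$ of the second.
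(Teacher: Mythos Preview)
Your proof is correct and follows essentially the same approach as the paper: your maps $\phi,\psi$ are exactly the paper's $f_1,f_2$, and for $Y\in\Pi\setminus\Lambda$ your orthogonal-idempotent decomposition unpacks the same pair of mutually inverse maps $g_1,g_2$ the paper writes down directly. The only difference is presentational---you first decompose $vL=qL\oplus\bigoplus_{e\in Y}ee^*L$ and then identify each $ee^*L\cong t(e)L$, whereas the paper folds both steps into a single pair of maps $m\mapsto((e^*m)_e,qm)$ and $((m_e),m_Y)\mapsto\sum em_e+m_Y$.
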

\begin{proof}
Notice that $sY - \sum_{e \in Y} ee^*$ is an idempotent, as 
$$(sY - \sum_{e \in Y} ee^*)(sY - \sum_{e \in Y} ee^*)= 
sY - sY\sum_{e \in Y} ee^* - (\sum_{e \in Y} ee^*)sY + (\sum_{e \in Y} ee^*)^2 $$
$$ sY - \sum_{e \in Y} ee^* - (\sum_{e \in Y} ee^*) + (\sum_{e \in Y} ee^*) = sY - \sum_{e \in Y} ee^* $$ by using the (CK2) relation. 
Hence, $(sY - \sum_{e \in Y} ee^*)L$ is a cyclic projective $L$-module, so is $vL$ for every vertex $v$ of $\Gamma$.

For $X \in \Lambda$, define the following maps $f_1 : (sX)L \rightarrow \bigoplus_{e \in X} t(e)L $  as 
$m \mapsto (e^*m)_{e \in X} $ and $f_2 :  \bigoplus_{e \in X} t(e)L \rightarrow (sX)L $ as 
$ (m_e)_{e \in X} \mapsto \sum_{e \in X}em_e$ where $f_1 \circ f_2$ and $f_2 \circ f_1$ are both the identity functions. 
Similarly, define 
$g_1 : (sY)L \rightarrow (\bigoplus_{e \in Y} t(e)L) \bigoplus (sY-\sum_{e \in Y} ee^*)L$  as 
$m \mapsto ( (e^*m)_{e \in Y}, (sY- \sum_{e \in Y} ee^*)m ) $ and 
$g_2 :  (\bigoplus_{e \in X} t(e)L) \bigoplus (sY-\sum_{e \in Y} ee^*)L \rightarrow (sY)L $ as 
$ ((m_e)_{e \in Y},m_Y) \mapsto \sum_{e \in Y}em_e + m_Y$ where 
$g_1 \circ g_2$ and $g_2 \circ g_1$ are also the identity functions. 
\end{proof}

For a given triple $(\Gamma, \Pi,\Lambda)$, let $L$ denote $CL(\Gamma, \Pi, \Lambda)$. Then 
there exists a semigroup homomorphism $\alpha : S(\Gamma, \Pi, \Lambda) \rightarrow \mathcal{V}(L)$ that maps 
$v \mapsto [vL]$ and $Y \mapsto [(sY - \sum_{e \in Y} ee^*)L]$ for $v \in V, $ and $Y \in \Pi \backslash \Lambda$.
The map $\alpha$ defined on the generators of $ \N\Omega $ can be extended additively. 
We can show that $\alpha$ preserves the relations on $S(\Gamma, \Pi, \Lambda)$ by Lemma \ref{iso}.
Hence, $\alpha$ is a semigroup homomorphism. 
In fact, this homomorphism $\alpha$ is an isomorphism as proved in \cite[Theorem 4.3]{AG} which we will use, but not prove here.

\medskip 

Now, we are ready to prove our main result. 
\begin{theorem}\label{MainThmforseparatedLPA} For a given triple $(\Gamma, \Pi,\Lambda)$, let $L$ denote the Cohn-Leavitt path algebra, 
$CL(\Gamma, \Pi, \Lambda)$ over the triple. Then   
$L$ is IBN if and only if $ \sum_{v \in V}v$  is not in the $\Q$-span of the relations 
$\{ sX - \sum_{e \in X}te\}_{ X \in \Lambda}$  in $\Q\Omega$.
\end{theorem}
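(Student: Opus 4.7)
The plan is to reduce the IBN condition from Proposition \ref{RIBNU(R)inf} to a $\mathbb{Q}$-linear algebra question in $\mathbb{Q}\Omega$, by transporting $[L]$ along the Ara--Goodearl isomorphism $\alpha\colon S(\Gamma,\Pi,\Lambda)\to\mathcal{V}^\ast(L)$ stated just before the theorem.

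First, since $\Gamma$ is finite, $\{v:v\in V\}$ is a complete set of orthogonal idempotents in $L$, so $L\cong\bigoplus_{v\in V}vL$ as a right $L$-module and hence $[L]=\sum_{v\in V}[vL]$ in $\mathcal{V}(L)$. By Proposition \ref{RIBNU(R)inf}, $L$ has IBN if and only if $[L]_{\mathbb{Q}}\neq 0$ in $\mathcal{K}_0(L)\otimes\mathbb{Q}$. Composing $\alpha$ with the identification $G(\mathcal{V}^\ast(L))\cong G(\mathcal{V}(L))=\mathcal{K}_0(L)$ from Lemma \ref{SfgG(S)}(ii) yields a group isomorphism $G(S(\Gamma,\Pi,\Lambda))\cong \mathcal{K}_0(L)$ sending $\sum_{v\in V}v$ to $[L]_{\mathbb{Z}}$. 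Since $G(S(\Gamma,\Pi,\Lambda))$ is $\mathbb{Z}\Omega$ modulo the $\mathbb{Z}$-span of the defining relations, tensoring with $\mathbb{Q}$ gives
\[
\mathcal{K}_0(L)\otimes\mathbb{Q}\;\cong\;\mathbb{Q}\Omega/R,
\]
where $R$ is the $\mathbb{Q}$-span of the full family $\{sX-\sum_{e\in X}t(e)\}_{X\in\Lambda}\cup\{sY-Y-\sum_{f\in Y}t(f)\}_{Y\in\Pi\setminus\Lambda}$.

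It thus remains to show that $\sum_{v\in V}v\in R$ iff $\sum_{v\in V}v$ lies in the $\mathbb{Q}$-span $R'$ of only the $\Lambda$-relations. Decompose $\mathbb{Q}\Omega=\mathbb{Q}V\oplus\mathbb{Q}(\Pi\setminus\Lambda)$. Each $\Lambda$-relation lies in $\mathbb{Q}V$, whereas each $\Pi\setminus\Lambda$-relation has $\mathbb{Q}(\Pi\setminus\Lambda)$-component equal to $-Y$. Consequently, any $\mathbb{Q}$-linear combination of the relations that lands in $\mathbb{Q}V$ must have zero coefficient on every $\Pi\setminus\Lambda$-relation, so $R\cap\mathbb{Q}V=R'$. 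Since $\sum_{v\in V}v\in\mathbb{Q}V$, the equivalence $\sum_{v\in V}v\in R\iff \sum_{v\in V}v\in R'$ follows, and combining with the previous paragraph completes the proof.

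The only substantive input is the Ara--Goodearl identification $\mathcal{V}^\ast(L)\cong S(\Gamma,\Pi,\Lambda)$, which we cite rather than reprove; the remaining work is elementary bookkeeping with Grothendieck groups and a one-line elimination of the $\Pi\setminus\Lambda$-generators over $\mathbb{Q}$. The mild subtlety worth flagging is that the theorem's span is taken only over $\Lambda$-relations, not over all of $\Pi$, and the direct-sum decomposition of $\mathbb{Q}\Omega$ is exactly what makes this reduction legitimate.
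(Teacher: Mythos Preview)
Your proof is correct and follows essentially the same route as the paper's first proof: invoke Proposition \ref{RIBNU(R)inf}, transport $[L]=\sum_{v\in V}[vL]$ through the Ara--Goodearl isomorphism to $\Q\Omega$ modulo the $\Q$-span of all $\Pi$-relations, and then eliminate the $\Pi\setminus\Lambda$-relations via the decomposition $\Q\Omega=\Q V\oplus\Q(\Pi\setminus\Lambda)$. The only cosmetic difference is that the paper phrases the last step using coordinate projections $pr_Y$ rather than the intersection $R\cap\Q V=R'$, but the content is identical.
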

\begin{proof}
In the following diagram of commutative additive semigroups,  
$$ \xymatrix{ \N\Omega  \setminus \{{ \mathbf 0} \} \ar@{^{(}->}[r] \ar@{->}[d]^f  & \Z\Omega \ar@{^{(}->}[r] \ar@{->}[d]^g &  
\Q\Omega \ar@{->}[d]^h   \\
 S(\Gamma,\Pi,\Lambda) \ar@{->}[r]^\alpha \ar@{->}[d]^{\cong}  & S_{\Z}(\Gamma,\Pi,\Lambda) \ar@{->}[r]^\beta 
\ar@{->}[d]^{\cong}&  S_{\Q}(\Gamma,\Pi,\Lambda)  \ar@{->}[d]^{\cong} \\ 
\mathcal{V}^*(L) \ar@{->}[r] & \mathcal{K}_0(L) \ar@{->}[r] & \mathcal{K}_0(L)\otimes \Q  }
$$
$f,g,h$ are epimorphisms with $Ker(g) = \Z-span \{ R_X \}_{X \in \Pi}$ and $Ker(h) = \Q-span \{ R_X\}_{X \in \Pi}$ . 
Now, $S(\Gamma,\Pi,\Lambda) \cong  \mathcal{V}^*(L)$ by \cite[Theorem 4.3]{AG} and since taking Grothendieck group and tensoring with $\Q$ are both functorial $S_{\Z}(\Gamma,\Pi,\Lambda) \cong  \mathcal{K}_0(L)$
and $S_{\Q}(\Gamma,\Pi,\Lambda) \cong \mathcal{K}_0(L)\otimes \Q $.

$L$ has IBN if and only if $\mathcal{U}(L)$ is infinite if and only if $[L]$ has infinite order in $\mathcal{U}(L)$ 
by Proposition \ref{RIBNU(R)inf}. 
Moreover, $\mathcal{U}(L)$ is cyclic, so $\mathcal{U}(L) \cong \mathbb{N}$ and $G(\mathcal{U}(L)) \cong \mathbb{Z}$. 
By Proposition \ref{G(U)embedG(V)}, as $G(\mathcal{U}(L)) $ is embedded in $\mathcal{K}_0(L)$; 
$[L]_{\Z}$ also has infinite order in $\mathcal{K}_0(L)$. This is identical to: 
$[L]_{\Q}$ is non-zero in $\mathcal{K}_0(L)\otimes \Q $. 
On the other hand, by \cite[Theorem 4.3]{AG}, $S(\Gamma, \Pi, \Lambda) \cong \mathcal{V}(L)$. 
$ [\sum_{v \in V}v] $ is non-zero in $G(S(\Gamma,\Pi,\Lambda)) \otimes \Q $. Equivalently, 
$ \sum_{v \in V}v $ is not in the $\Q$-span of the relations $\{R_X\}_{X \in \Pi}$.

If $ \sum_{v \in V}v$ is in $\Q$-span of $\{R_X\}_{X \in \Pi}$, then $ \sum_{v \in V}v = \sum_{X \in \Pi}\alpha_X R_X $ for some coefficients 
$\alpha_X \in \Q$. However, we are going to show that the coefficient of $R_Y$, for any $Y \in \Pi \backslash \Lambda$ has to be zero. 

For any fixed $Y \in \Pi\backslash \Lambda$, consider the projection map 
$pr_Y : \Q\Omega  \rightarrow \Q$. Note that $\Q\Omega = \Q V \oplus \Q(\Pi\backslash \Lambda)$, so $pr_Y(\sum_{v \in V}v) =0$, hence 
$pr_Y(\sum_{X \in \Pi}\alpha_X R_X) =0$. Since $pr_Y(R_X) = \delta_{XY}$, then in this sum $X \neq Y$. 
Therefore,  $X \not \in \Pi \backslash \Lambda$, i.e., $X \in \Lambda$. 
\end{proof}
The original proof of Theorem \ref{MainThmforseparatedLPA} was more geometric. It was presented by the first named author at the CIMPA Research School Izmir, 
Turkey, in July 2015. (The videos of these talks have been publicly available as instructed by CIMPA.) We will sketch this geometric proof below. 

\begin{2ndproof} 
If $\sum_{v\in V} v$ is in the $\Q$-span of the relations of the semigroup of $(\Gamma, \Pi, \Lambda)$, then there exists 
$k \in \Z$, so that $k\sum_{v\in V} v$ is in the $\Z$-span of the relations $R_X$, for $X \in \Pi$.
Hence, there is a path from $\sum_{v\in V} v$ to $(k+1)(\sum_{v\in V} v)$ in the $\Z\Omega$ representation of 
$S(\Gamma, \Pi, \Lambda)$. 
Now, if this path is totally contained in $\N\Omega \setminus \{{\bf 0}\}$, then $L^1 \cong L^{k+1}$ where $L$ is the separated Cohn-Leavitt path algebra over $(\Gamma, \Pi, \Lambda)$. If not, as the geometric representation of $S(\Gamma, \Pi, \Lambda)$ is translation invariant, we can move the path from $\Z\Omega$ to $\N\Omega \setminus \{{\bf 0}\} $ translating by 
$t\sum_{v\in V} v$ by sufficiently large $t$. Hence, we obtain a path from 
$(1+t)(\sum_{v\in V} v)$ to $(k+t)(\sum_{v\in V} v)$ in the $\N\Omega \setminus \{{\bf 0}\}$ representation of $S(\Gamma, \Pi, \Lambda)$. 
Again, $L^{1+t} \cong L^{k+t}$. Thus, $CL(\Gamma, \Pi, \Lambda)$ is not IBN.

Conversely, if $L=CL(\Gamma, \Pi, \Lambda)$ is not IBN, then there exist distinct $k,l \in \N$ with $L^k \cong L^l$. Then 
there is a path from $k(\sum_{v\in V} v)$ to $l(\sum_{v\in V} v)$ in the $\N\Omega \setminus \{{\bf 0}\}$ representation of 
$S(\Gamma, \Pi, \Lambda)$. (or both vectors are in the same path component)  
Then $(k-l)\sum_{v\in V} v$ is in the $\Z$-span of the relations of the semigroup of $(\Gamma, \Pi, \Lambda)$, hence 
$\sum_{v\in V} v$ is in the $\Q$-span of the relations $R_X$ for $X \in \Pi$. In fact, $\sum_{v\in V} v$ is in the $\Q$-span of the relations $R_X$ for $X \in \Lambda$ can be shown as in the last paragraph of the first proof.  
\end{2ndproof}
   
Now, an immediate corollary to the main result is that any separated Cohn path algebra has IBN. 
This generalizes the fact that any Cohn path algebra has IBN which was proven in \cite[Theorem 9]{AK}.
\begin{corollary}\label{sepCohnIBN} 
Any separated Cohn path algebra has IBN. 
\end{corollary}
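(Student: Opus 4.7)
The plan is to invoke Theorem \ref{MainThmforseparatedLPA} directly, since a separated Cohn path algebra is by definition the case $\Lambda = \emptyset$ of the general Cohn-Leavitt construction. The theorem characterizes IBN via a condition on the $\Q$-span in $\Q\Omega$ of the relations indexed by $\Lambda$, so the empty-$\Lambda$ case should be essentially automatic.

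More precisely, I would unpack the IBN criterion of Theorem \ref{MainThmforseparatedLPA} in this special case. The criterion asks whether $\sum_{v\in V} v$ lies in the $\Q$-span of the relations $\{sX - \sum_{e \in X} te\}_{X \in \Lambda}$. When $\Lambda = \emptyset$ this indexed family is empty, so the $\Q$-span in question is the zero subspace $\{0\}$ of $\Q\Omega$. It therefore suffices to verify that $\sum_{v \in V} v \neq 0$ in $\Q\Omega$. This is immediate from the fact that $\Omega = V \sqcup (\Pi \setminus \Lambda) = V \sqcup \Pi$ is (part of) a free basis for $\Q\Omega$ and $V$ is nonempty (otherwise $C(\Gamma, \Pi)$ is the zero algebra, which one excludes in the statement of IBN).

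There is no real obstacle here, as the argument is a one-line reduction to Theorem \ref{MainThmforseparatedLPA}: all the analytical content has been packaged into the main theorem. The only thing to remark on in the write-up is the comparison with \cite[Theorem 9]{AK}, which treats the non-separated case ($\Pi = \{s^{-1}(v) : v \in V,\ v \text{ not a sink}\}$); the statement here generalizes that result because our $\Pi$ is arbitrary (as long as it refines the partition by source), while the IBN conclusion follows from exactly the same mechanism, namely the vanishing of the $\Q$-span of the $\Lambda$-indexed relations.
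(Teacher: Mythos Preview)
Your proposal is correct and matches the paper's approach exactly: the paper presents this as an immediate corollary of Theorem \ref{MainThmforseparatedLPA} without further argument, and your write-up simply spells out why the criterion is trivially satisfied when $\Lambda=\emptyset$. Your remark comparing with \cite[Theorem 9]{AK} also mirrors the paper's comment preceding the corollary.
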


\section{Corner Subalgebras of Separated Cohn-Leavitt Path Algebras and IBN} 
The next theorem shows that the non-stable $\mathcal{K}$-theory of a corner ring is given explicitly in terms of the non-stable $\mathcal{K}$-theory of the ring. 
\begin{theorem}\label{V(xRx)1-1V(R)} Assume $R$ is a ring and $\alpha \in R$ is an idempotent.
Then the semigroup homomorphism induced by $ - \otimes_{\alpha R\alpha} \alpha R $ from $\mathcal{V}(\alpha R\alpha)$ into 
$\mathcal{V}(R)$ is 1-1. Moreover, 
$$\mathcal{V}(\alpha R\alpha) %= \overline{\mathcal{U}(\alpha R\alpha)}  
\cong \overline{<[\alpha R]>}   \subseteq \mathcal{V}(R).$$
\end{theorem}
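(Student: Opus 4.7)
The plan is to build a quasi-inverse to $F := -\otimes_{\alpha R\alpha}\alpha R$ on the appropriate subcategory, using the functor $G := (-)\cdot\alpha$ that sends a right $R$-module $M$ to $M\alpha$, viewed as a right $\alpha R\alpha$-module (this $G$ is naturally isomorphic to $\mathrm{Hom}_R(\alpha R,-)$, the right adjoint of $F$). Since $\alpha R$ is a finitely generated projective right $R$-module (being a direct summand of $R=\alpha R\oplus(1-\alpha)R$), $F$ sends finitely generated projective right $\alpha R\alpha$-modules to finitely generated projective right $R$-modules, so it descends to a well-defined monoid homomorphism $\mathcal{V}(\alpha R\alpha)\to\mathcal{V}(R)$.

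To prove injectivity, I would establish a natural isomorphism $\eta_P : P\xrightarrow{\cong} G(F(P))$ for every finitely generated projective right $\alpha R\alpha$-module $P$. For $P=\alpha R\alpha$ this is the chain of canonical identifications $F(\alpha R\alpha)=\alpha R\alpha\otimes_{\alpha R\alpha}\alpha R\cong \alpha R$, whose $\alpha$-part is $(\alpha R)\alpha=\alpha R\alpha$. Both $P$ and $G(F(P))$ are additive in $P$, so $\eta$ extends to all finite-rank free $\alpha R\alpha$-modules and, by naturality, restricts to any direct summand, hence holds for all fg projectives. Consequently, if $F([P])=F([Q])$ in $\mathcal{V}(R)$, then $F(P)\cong F(Q)$ as $R$-modules, giving $P\cong GF(P)\cong GF(Q)\cong Q$.

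For the closure identification $\mathcal{V}(\alpha R\alpha)\cong\overline{\langle[\alpha R]\rangle}$, the inclusion $\mathrm{Im}(F)\subseteq\overline{\langle[\alpha R]\rangle}$ is immediate: for $[P]\in\mathcal{V}(\alpha R\alpha)$, choose $n$ and $P'$ with $P\oplus P'\cong(\alpha R\alpha)^n$; applying $F$ yields $F(P)\oplus F(P')\cong(\alpha R)^n$, so $[F(P)]\preccurlyeq n[\alpha R]$. For the reverse inclusion, suppose $Q\oplus Q'\cong(\alpha R)^n$ as right $R$-modules. Applying $G$ gives $Q\alpha\oplus Q'\alpha\cong(\alpha R\alpha)^n$, so $Q\alpha$ is finitely generated projective over $\alpha R\alpha$. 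I would then check that the counit $\varepsilon_Q : Q\alpha\otimes_{\alpha R\alpha}\alpha R\to Q$, $q\alpha\otimes\alpha r\mapsto q\alpha r$, is an isomorphism: verify it directly for $Q=\alpha R$ (where both sides equal $\alpha R$), extend additively to $(\alpha R)^n$, and transfer to summands via naturality. This yields $[Q]=[F(Q\alpha)]\in\mathrm{Im}(F)$.

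The main obstacle, and the only step that requires genuine care, is verifying the naturality of the unit $\eta$ and counit $\varepsilon$ and their compatibility with direct sum decompositions; this is what allows the computations performed on the free modules $(\alpha R\alpha)^n$ and $(\alpha R)^n$ to transfer to arbitrary summands. Once this naturality is in hand, the remainder of the argument is standard bookkeeping with the tensor-hom adjunction provided by the bimodule $\alpha R$.
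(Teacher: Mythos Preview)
Your approach is essentially the same as the paper's: both set up the adjoint pair $F=-\otimes_{\alpha R\alpha}\alpha R$ and $G=(-)\alpha\cong\mathrm{Hom}_R(\alpha R,-)$ and show $GF\cong\mathrm{id}$ on finitely generated projectives for injectivity, then show $FG\cong\mathrm{id}$ on $\overline{\langle[\alpha R]\rangle}$ for the closure statement. The tactical difference is that for injectivity the paper tensors the inclusion $\alpha R\alpha\hookrightarrow\alpha R$ with a projective (hence flat) $P$ and identifies the image with $(P\otimes\alpha R)\alpha$, whereas you verify the unit on $\alpha R\alpha$ and propagate to summands by naturality; for the closure statement the paper writes out your counit explicitly using the coordinate projections and inclusions $\pi_j,\phi_j$ coming from $P\oplus Q\cong(\alpha R)^k$, which is exactly the computation underlying your ``check on $(\alpha R)^n$ and transfer to summands'' step. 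One small caveat: your parenthetical $R=\alpha R\oplus(1-\alpha)R$ presumes $R$ is unital, while the paper explicitly allows non-unital $R$; this does not affect the argument, since $\alpha R$ is still cyclic projective in that setting, but you should not rely on $1-\alpha$.
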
 
\begin{proof}
Assume $R$ is a ring (not necessarily unital) and $\alpha\in R$ is an idempotent. 
$vR$ is a projective right $R$-module. Let $\alpha R\alpha$ be the associated corner subring of $R$. Note that 
$\alpha$ is the unit element in $\alpha R\alpha$, so 
$\alpha R\alpha$ is a unital ring. However, $\alpha R\alpha$ is not a unital subring of $R$, as $\alpha$ is not the unit of $R$ unless 
$\alpha R\alpha=R$. $\alpha R$ is finitely generated $(\alpha R\alpha,R)$-bimodule and as a right $R$-module it is finitely generated 
(in fact, cyclic) projective. 

Our aim is to prove that $\mathcal{V}(\alpha R\alpha)$ is embedded into $\mathcal{V}(R)$. 
Consider $\mathcal{M}_{\alpha R\alpha}$ as the category of right $\alpha R\alpha$-modules, $\mathcal{M}_R$ as the category of right 
$R$-modules and $F$ as the functor $-\otimes_{\alpha R\alpha} \alpha R$ from $\mathcal{M}_{\alpha R\alpha }$ to $\mathcal{M}_R$. Now, the functor $F$ induces a semigroup homomorphism $f$ 
from $\mathcal{V}(\alpha R\alpha)$ to $\mathcal{V}(R)$ as for any isomorphism class of finitely generated projective $R$-module $P$, $$ [P] \overset{f}{\mapsto} [P \otimes_{\alpha R\alpha } \alpha R] .$$ Also, define $G$ as the functor $Hom_R(\alpha R,-)$ from 
$\mathcal{M}_R$ to $\mathcal{M}_{\alpha R\alpha}$. Notice that $\alpha R$ is mapped to $Hom_R(\alpha R,\alpha R)$ which is isomophic to 
$\alpha R\alpha$ as a $\alpha R\alpha$-module. 
We can also show that the functor $H :  \mathcal{M}_R  \rightarrow \mathcal{M}_{\alpha R\alpha}$ defined as $M \mapsto M\alpha$ for any $R$-module $M$, is equivalent to the functor $G$. Hence, replacing $G$ with $H$, for any projective $R$-module $P$ we get   
$$ P \overset{F}{\mapsto} P \otimes_{\alpha R\alpha } \alpha R \overset{H}{\mapsto} (P \otimes \alpha R)\alpha .$$
Take $\alpha R\alpha \hookrightarrow \alpha R$ inclusion map, and tensor with a projective module $P$, that is:
$$ P \otimes \alpha R\alpha \overset{\beta}{\rightarrow} P \otimes \alpha R.$$ As $P$ is projective, it is flat; so $\beta$ is 1-1. 
Moreover, $P \cong P \otimes \alpha R\alpha \cong \beta( P \otimes \alpha R\alpha ) = (P \otimes \alpha R)\alpha$.   
We proved that $(P \otimes \alpha R)\alpha \cong P$, and conclude that their isomophism classes in $\mathcal{V}(R)$ are the same. 
The semigroup homomorphism $h$, induced by the functor $H$ gives
$$ [P] \overset{f}{\mapsto} [P \otimes_{\alpha R\alpha} \alpha R] \overset{h}{\mapsto} [P].$$ Thus, the semigroup homomorphism $f$ is 1-1.  

In particular,$$ <[\alpha R\alpha]> = \mathcal{U}(\alpha R\alpha) \cong  <[\alpha R]> \subseteq \mathcal{V}(R).$$ 

For the last statement, we will show that the closure of $<[\alpha R]>$ in $\mathcal{V}(R)$ is isomorphic to the closure of 
$\mathcal{U}(\alpha R\alpha)$ in $\mathcal{V}(\alpha R\alpha)$.

Consider a (right) $R$-module $P$ where $P \oplus Q \cong (\alpha R)^k$ for some positive integer $k$ and $R$-module $Q$, i.e., $[P]$ is an element of $\overline{<[\alpha R]>}$ in $\mathcal{V}(R)$.  
We can identify $P$ with its isomorphic copy as a submodule in $(\alpha R)^k$.

Consider the maps $\mu: Hom_R(\alpha R, P) \otimes \alpha R \rightarrow P$, and $\eta: P \rightarrow Hom_R(\alpha R, P) \otimes \alpha R $ 
mapping $\mu: f \otimes x \mapsto fx$ and $\eta: y \mapsto \sum_{j=1}^k \phi_j \otimes \pi_j y$ where $\pi_j: P \rightarrow \alpha R$ is the (restriction of the) projection map to the $j^{th}$ component and $\phi_j: \alpha R \rightarrow P $ defined as the composition of the inclusion of $\alpha R$ into $(\alpha R)^k$ as the $j^{th}$ coordinate with the projection from $(\alpha R)^k$ to $P$.
%$$x \mapsto \begin{cases}  (0,\cdots, \overset{j^{th}}x, 0, \cdots,0) & \mbox{ if } x \in \pi_j(P)\\ 0 & 
%\mbox{ if } x \not \in \pi_j(P) \end{cases}$$

Note that $\pi_j \circ f \in End_R(\alpha R) \cong \alpha R \alpha $ and 
 $Hom_R(\alpha R, P)$ is a right $End_R(\alpha R)$-module. (All tensor products are over $\alpha R \alpha$.) Further, 
$(\sum_{j=1}^k \phi_j \pi_j) $ is the identity map on $P$. We get 
$$\eta \mu (f \otimes x ) =   \sum_{j=1}^k \phi_j \otimes \pi_j fx = (\sum_{j=1}^k \phi_j \pi_j) f \otimes x = f \otimes x  \mbox{ and} $$ 
$$ \mu \eta (y) =   \sum_{j=1}^k \phi_j \otimes \pi_j y = \sum_{j=1}^k \phi_j \pi_j y = y.$$ 
%The pre-image of $P$ under the map $- \otimes \alpha R$ is $Hom_R(\alpha R, P)$ which is a right $\alpha R\alpha$-module. 
We need to show that $Hom_R(\alpha R, P)$ is finitely generated projective. As $P \oplus Q \cong (\alpha R)^k$ for some positive integer $k$ and $R$-module $Q$, applying the $Hom_R(\alpha R, -)$ functor we get
$$Hom_R(\alpha R, P) \oplus Hom_R(\alpha R, Q) \cong Hom_R(\alpha R, (\alpha R)^k) = (Hom_R(\alpha R, \alpha R) )^k\cong (\alpha R\alpha)^k .$$
%So $Hom_R(\alpha R, P)$ is finitely generated projective. 
%As $Hom_R(\alpha R, P)$ is a quotient of a finitely generated module, it is finitely generated. 
Hence, $Hom_R(\alpha R, -)$ induces an isomorphism from $\overline{<[\alpha R]>} $ to $\mathcal{V}(\alpha R\alpha)$.  
\end{proof}

\medskip

\begin{corollary}\label{vRv-vR} 
Assume $R$ is a ring and $\alpha \in R$ is an idempotent. Then for $m < n$,\\
$(\alpha R\alpha)^m \cong (\alpha R\alpha)^n$ as $\alpha R \alpha$-modules if and only if 
$(\alpha R )^m \cong (\alpha R)^n$ as $R$-modules. Hence, $\alpha R\alpha$ has IBN if and only if $[\alpha R]$ has infinite order in $\mathcal{V}(R)$. 
  Otherwise, $\alpha R\alpha$ is non-IBN of type $(m,n)$ if and only if $[\alpha R]$ is torsion of type $(m,n)$ in $\mathcal{V}(R)$. 
\end{corollary}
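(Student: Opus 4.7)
The plan is to deduce the corollary directly from Theorem \ref{V(xRx)1-1V(R)}. First I would record the observation that the injective semigroup homomorphism $f : \mathcal{V}(\alpha R \alpha) \hookrightarrow \mathcal{V}(R)$ provided by that theorem sends the generator $[\alpha R \alpha]$ of $\mathcal{U}(\alpha R \alpha)$ to $[\alpha R]$, because $\alpha R \alpha \otimes_{\alpha R \alpha} \alpha R \cong \alpha R$ as right $R$-modules. Since $f$ is additive, it carries $m[\alpha R \alpha] = [(\alpha R \alpha)^m]$ to $m[\alpha R] = [(\alpha R)^m]$ for every positive integer $m$.

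Next I would use injectivity of $f$ to translate equalities in $\mathcal{V}(\alpha R \alpha)$ to equalities in $\mathcal{V}(R)$ and back. Concretely, $(\alpha R \alpha)^m \cong (\alpha R \alpha)^n$ as $\alpha R \alpha$-modules is equivalent to $m[\alpha R \alpha] = n[\alpha R \alpha]$ in $\mathcal{V}(\alpha R \alpha)$, and applying $f$ (respectively, using the injectivity of $f$) this is equivalent to $m[\alpha R] = n[\alpha R]$ in $\mathcal{V}(R)$, that is, $(\alpha R)^m \cong (\alpha R)^n$ as $R$-modules. This establishes the first biconditional.

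Finally, I would assemble the IBN consequences. By the rephrasing of IBN recalled just before Proposition \ref{RIBNU(R)inf}, the ring $\alpha R \alpha$ has IBN exactly when $[\alpha R \alpha]$ has infinite order in $\mathcal{V}(\alpha R \alpha)$; by the first part this happens exactly when $[\alpha R]$ has infinite order in $\mathcal{V}(R)$. If instead $\alpha R \alpha$ is non-IBN of type $(m,n)$, then by definition $n$ is the least positive integer such that $m[\alpha R \alpha] = n[\alpha R \alpha]$ for some $0 < m < n$; transporting this minimality along the injection $f$ shows that the same $(m,n)$ is the torsion type of $[\alpha R]$ in $\mathcal{V}(R)$, and conversely.

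The only mild subtlety I foresee is verifying that the minimality defining the type $(m,n)$ is preserved in both directions; but this is immediate from the fact that $f$ restricts to a bijection $\mathcal{U}(\alpha R \alpha) \to \langle [\alpha R] \rangle$ that intertwines the semigroup structures, so no further work beyond citing Theorem \ref{V(xRx)1-1V(R)} is required.
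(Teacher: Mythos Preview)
Your proposal is correct and is exactly the intended deduction: the paper states this as an immediate corollary of Theorem~\ref{V(xRx)1-1V(R)} without further argument, and your write-up just makes explicit that the injection $f$ sends $[\alpha R\alpha]\mapsto[\alpha R]$ and hence identifies $\mathcal{U}(\alpha R\alpha)$ with $\langle[\alpha R]\rangle\subseteq\mathcal{V}(R)$. (The paper does note in Remark~\ref{vR-vRv} a separate direct proof via $Hom_R((\alpha R)^m,(\alpha R)^n)\cong M_{n\times m}(\alpha R\alpha)\cong Hom_{\alpha R\alpha}((\alpha R\alpha)^m,(\alpha R\alpha)^n)$, but your route through Theorem~\ref{V(xRx)1-1V(R)} is the one the corollary placement signals.)
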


\begin{remark}\label{vR-vRv} Note that Corollary \ref{vRv-vR} can be  proven directly as:
%Clearly, $End_R(\alpha R) \cong  \alpha R\alpha$ as rings.
%Hence,
$$Hom_R((\alpha R)^m,(\alpha R)^n) \cong M_{n \times m}(\alpha R\alpha ) 
\cong Hom_{\alpha R\alpha}((\alpha R\alpha)^m,(\alpha R\alpha)^n).$$  However, this direct proof does not give 
$\mathcal{V}(\alpha R\alpha)$ as a submonoid of $\mathcal{V}(R)$.
\end{remark}

\begin{theorem}\label{MainThmforCornerLPA} Assume $L$ is the Cohn-Leavitt path algebra of $(\Gamma, \Pi, \Lambda)$ and $\alpha \in L$ is an idempotent of the form $\alpha = \sum_{v \in H}v$ where $H$ is a subset of the vertex set $V$. If $\alpha$ is not in the span of the relations $\{ sX - \sum_{e \in X}te\}_{ X \in \Lambda}$ in $\Q \Omega$, 
then $\alpha R\alpha$ has IBN.
\end{theorem}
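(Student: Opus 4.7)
The plan is to reduce the theorem to Corollary \ref{vRv-vR}, which translates IBN of the corner $\alpha L\alpha$ into the statement that $[\alpha L]$ has infinite order in $\mathcal{V}(L)$, and then to mimic the Grothendieck-group argument from the proof of Theorem \ref{MainThmforseparatedLPA}. The first step is to identify $[\alpha L]$ inside the graph semigroup: since the vertex idempotents are pairwise orthogonal by relation (V), the right ideal $\alpha L$ decomposes as $\alpha L = \bigoplus_{v \in H} vL$, so under the isomorphism $S(\Gamma,\Pi,\Lambda) \cong \mathcal{V}^*(L)$ of \cite[Theorem 4.3]{AG} the class $[\alpha L]$ corresponds to $\sum_{v \in H} v$, which is precisely the element $\alpha \in \N\Omega$ occurring in the hypothesis.

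By Corollary \ref{vRv-vR} it suffices to prove that $[\alpha L]$ has infinite order in $\mathcal{V}(L)$. Taking the contrapositive of Fact \ref{PropertiesG(S)}\,(\ref{torsioninStorsioninG(S)}), it is enough to show that $[\alpha L]_{\Z}$ has infinite order in $\mathcal{K}_0(L) = G(\mathcal{V}^*(L))$, and for that it is enough to show $[\alpha L]_{\Q} \neq 0$ in $\mathcal{K}_0(L) \otimes \Q$. Through the commutative diagram appearing in the proof of Theorem \ref{MainThmforseparatedLPA} (with $\sum_{v \in V} v$ replaced by $\sum_{v \in H} v$), this in turn reduces to showing that $\sum_{v \in H} v$ is not in the $\Q$-span of the full family $\{R_X\}_{X \in \Pi}$ in $\Q\Omega$. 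The hypothesis only asserts this for $X \in \Lambda$, so I would close the gap with the same projection trick used at the end of the first proof of the main theorem: for any $Y \in \Pi \setminus \Lambda$ the projection $pr_Y : \Q\Omega \to \Q$ satisfies $pr_Y(\sum_{v \in H} v) = 0$ (since $\sum v \in \Q V$) and $pr_Y(R_X) = -\delta_{X,Y}$, so any putative expression $\sum_{v \in H} v = \sum_{X \in \Pi} \alpha_X R_X$ forces $\alpha_Y = 0$ for every $Y \in \Pi \setminus \Lambda$, placing the whole sum already in the $\Q$-span of $\{R_X\}_{X \in \Lambda}$ and contradicting the hypothesis.

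I do not foresee a real obstacle: the only substantive ingredients are the orthogonality of the vertex idempotents, which immediately identifies $[\alpha L]$ with $\sum_{v \in H} v$, and the projection trick, which is already carried out in the main theorem. Note that this argument yields only a sufficient condition and not a necessary one, because Fact \ref{PropertiesG(S)}\,(\ref{torsioninStorsioninG(S)}) is one-directional: an element of $\mathcal{V}^*(L)$ may have infinite order while its image in $\mathcal{K}_0(L)$ is torsion. This is precisely the slack that the counterexample promised after the theorem is meant to exploit.
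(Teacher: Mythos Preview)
Your proposal is correct and follows essentially the same route as the paper: identify $[\alpha L]$ with $\sum_{v\in H}v$ via \cite[Theorem 4.3]{AG}, use Corollary~\ref{vRv-vR} to reduce IBN of $\alpha L\alpha$ to $[\alpha L]$ having infinite order in $\mathcal{V}(L)$, push this to $\mathcal{K}_0(L)\otimes\Q$ via the contrapositive of Fact~\ref{PropertiesG(S)}\,(\ref{torsioninStorsioninG(S)}), and close the $\Lambda$-versus-$\Pi$ gap with the projection trick from Theorem~\ref{MainThmforseparatedLPA}. The only differences are cosmetic: you invoke Corollary~\ref{vRv-vR} directly where the paper unwinds through $\mathcal{U}(\alpha L\alpha)$ and Proposition~\ref{RIBNU(R)inf}, and you place the projection argument at the end rather than the beginning.
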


\begin{proof} Let $R_X:= sX - \sum_{e \in X}te$. 
If $ \alpha = \sum_{v \in H}v$ is in $\Q$-span of $\{R_X\}_{X \in \Pi}$, then $ \alpha = \sum_{v \in H}v$ is in $\Q$-span of 
$\{R_X\}_{X \in \Lambda}$ as proven in the proof of Theorem \ref{MainThmforseparatedLPA}. 
So we can assume $ \alpha= \sum_{v \in H}v $ is not in the $\Q$-span of the relations $\{R_X\}_{X \in \Pi}$. Equivalently, 
$ [\sum_{v \in H}v]_\Q $ is non-zero in $G(S(\Gamma,\Pi,\Lambda)) \otimes \Q $. On the other hand, by \cite[Theorem 4.3]{AG}, $S(\Gamma, \Pi, \Lambda) \cong \mathcal{V}(L)$, and $[\oplus_{v \in H} vL]_{\Q}$ is non-zero in $\mathcal{K}_0(L)\otimes \Q $. Equivalently, 
$[ \oplus_{v \in H} vL]_{\Z}$ also has infinite order 
in $\mathcal{K}_0(L)$ if and only if $[ \oplus_{v \in H} vL]$ has infinite order in $\mathcal{V}(L)$. Then, 
$[ \alpha L \alpha]$ has infinite order in $\mathcal{V}( \alpha L \alpha)$ if and only if 
$[ \alpha L \alpha]$ has infinite order in $\mathcal{U}( \alpha L \alpha)$ if and only if $\alpha L\alpha$ has IBN 
by Proposition \ref{RIBNU(R)inf}.

\end{proof}

\medskip 

\begin{example}\label{Toeplitz2}
Consider the Toeplitz algebra $L$ as the Leavitt path algebra of the following graph $\Gamma$ of Example \ref{Toeplitz} 
$$ \xymatrix{\bullet^{v} \ar@(ul,dl) \ar[r] &  \bullet^w 
 }$$
 
%$\mathcal{K}_0(L) \otimes \Q$ 
Consider the following diagram, 
$$  S(\Gamma)  \longrightarrow    S_{\Z}(\Gamma) \longrightarrow  S_{\Q}(\Gamma) $$ 
%$$[v] \mapsto [v]_{\Z} \mapsto [v]_{\Q} \neq 0 $$ 
$$[v] \longmapsto [v]_{\Z} \longmapsto [v]_{\Q} \neq 0 $$ 
$$ [w] \longmapsto  [w]_{\Z} =0 \longmapsto 0  $$ 
%$$[v] \longmapsto [v]_{\Z} \longmapsto [v]_{\Q} \neq 0 $$ 

Now, $[v] \in S(\Gamma)$ is not torsion, and neither is $[v]_{\Z}$ or $[v]_{\Q}$.  
%That is, $[wL]$ in $\mathcal{V}(L)$, is not torsion, but it is mapped to  $[wL]_{\Z}$ which is zero in $\mathcal{K}_0(L)$. 
Hence, $v$ is not in the $\Q$-span of the relation $R_1: v =v +w$, then by Theorem \ref{MainThmforCornerLPA}, $vLv$ has IBN.

However, $[w] \in S(\Gamma)$ is not torsion, so by Corollary \ref{vRv-vR}, $wLw$ has IBN. But 
$[w]_{\Z} = 0 \in S_{\Z}(\Gamma)$ is torsion. Hence, $w$ is in the $\Q$-span of the relation $R_1: v =v +w$, and Theorem \ref{MainThmforCornerLPA} is inconclusive. This example shows that the converse of Theorem \ref{MainThmforCornerLPA} is not true. 
\end{example}

In \cite{AK}, an example of two Morita equivalent rings, one having IBN and the other non-IBN is constructed.
%(atif Ex 12 sonu) Consider the graph of Example \ref{7} ). 
Next example provides Morita equivalent rings which are non-IBN, but are of different types. 
\begin{example} \label{E1} 
Consider the following separated graph $\Gamma$ with vertex set $V=\{v,w\}$ and edge set $E=\{e_1,\cdots, e_m,f_1,\cdots,f_n\}$ with $s(e)=v$ 
and $t(e)=w$ for all $e \in E$, $\Pi = \{ X,Y\}$ where $X=\{e_1,\cdots, e_m\}$, $Y=\{f_1,\cdots,f_n\}$ for $m<n$ positive integers and
$\Lambda = \Pi$. We want to figure out the nonstable $\mathcal{K}$-theory of $L:= L(\Gamma,\Pi)$, $vLv$ and $wLw$.  

\medskip 

The Leavitt algebra $L(m,n)$, the non-IBN algebra of type (m,n) constructed by Leavitt, is isomorphic to $wLw$ \cite{AG}. 
Since $\mathcal{V}(L)$ is generated by $[vL],[wL]$ subject to the relations $[vL]= m[wL] =n [wL]$, 
it follows that $[wL]$ generates $\mathcal{V}(L)$. Hence,  
$[wL]$ is a progenerator and $L$ is Morita equivalent to its corner algebra $wLw \cong End_R(wL)$. In fact, 
$L \cong M_{n+1}(wLw)$.

\medskip  

Also, $\overline{<[vL]>} = \mathcal{V}(L)$, since $[wL] \preceq [vL]$,we get that $vL$ is also a progenerator and $vLv$ is Morita equivalent to $L$. The element $[vLv]$ of $\mathcal{V}(vLv)$ maps to $[vL]$ in $\mathcal{V}(L)$. By Corollary \ref{vRv-vR}, $vLv$ is non-IBN of type 
$(m/d,n/d)$ where $d = gcd(m,n)$. By choosing $d \neq 1$, we get examples of non-IBN Morita equivalent rings of different types. 
%The set $\Omega = V $ and $v= mw$ and $v=n w$ are the relations on $\N\Omega  \setminus \{{ \mathbf 0} \}$.
\end{example}

\end{document}